\newcommand{\mk}{\mathfrak}
\newcommand{\mc}{\mathcal}
\newcommand{\mf}{\mathbf}
\newcommand{\mb}{\mathbb}
\newcommand{\mr}{\mathrm}
\newtheorem{theo}{Theorem}
\newtheorem{lem}[theo]{Lemma}
\newtheorem{prop}[theo]{Proposition}
\newtheorem{rem}{Remark}
\newtheorem{conj}{Conjecture}
\newtheorem{cor}[theo]{Corollary}
\newtheorem{assumptions}[theo]{Assumptions}
\title{On the error of Fokker-Planck approximations of some one-step density dependent processes}
\author{D\'avid Kunszenti-Kov\'acs}
\address{MTA Alfr\'ed R\'enyi Institute of Mathematics, P.O. Box 127, H-1364 Budapest, Hungary}
\email{daku@renyi.hu}
\keywords{two-state dynamics; $C_0$ -semigroups; approximation theorems; Fokker-Planck equation}
\subjclass[2010]{Primary: 35Q84, 47D06; Secondary: 47N40, 60J28}
\begin{document}

\maketitle

\begin{abstract}
Using operator semigroup methods, we show that Fokker-Planck type second-order PDE-s can be used to approximate the evolution of the distribution of a one-step process on $N$ particles governed by a large system of ODEs. The error bound is shown to be of order $O(1/N)$, surpassing earlier results that yielded this order for the error only for the expected value of the process, through mean-field approximations. We also present some conjectures showing that the methods used have the potential to yield even stronger bounds, up to $O(1/N^3)$.

\end{abstract}

\section{Introduction}

The aim of this paper is to expand the ideas presented in \cite{batkai2015pde} and \cite{KK-S2016}, where operator semigroup methods were used to provide a second-order PDE that can be used as a good approximation to a large Markovian stochastic system of two-state particles. Both papers provided a rough sketch of how to obtain bounds on the error of approximation. In this paper we give the full details of the proofs, highlighting where the difficulties lie. We also point towards further strengthening of the bounds and what they depend on.\\
One of the motivations is the study of disease propagation on Erd\H{o}s-R\'enyi type graphs, where, given any two subsets of their vertices, the number of edges between them is essentially proportional to the product of the sizes of the sets. This leads to a model where the dynamics are density dependent, that is governed by the number/proportion of vertices in the two possible states, and not by the specific location of these vertices.\\
Our stochastic model is as follows. We consider a system of $N$ identical elements or particles each of which can be in one of two states, denoted by $Q$ and $T$. The number of particles in state $Q$ at time $t$ is denoted by $X_Q(t)$ and similarly we use $X_T(t)$. These are considered to be random variables and our main goal is to derive PDEs yielding approximations for the distribution of these variables. The state of the system changes when the state of a particle changes, for which there are two possibilities: transitions $Q\to T$ and $T\to Q$. We shall assume that the transition rates depend solely on the proportion of nodes in each of the states, whereby the state of the system as a whole can be given by the number of particles of different types. The process is then a birth-death type process with a population cap.
The number of particles of each type changes by one in a short time interval, and the state of the system can be given by the pair $(k,N-k)$ yielding the number of particles of type $Q$ and $T$. In fact, the state will be given only by $k$, as the total number of particles does not change.
As usual, given the states of the system, we may define the transition rates, and derive the so-called master equations for the probability for each state. These contain all the information about the evolution of the sytem without any approximation, but the size of the family of equations increases with the number of particles. The mean-field ODE can be introduced to reduce the equation system to a single ODE. The stochastic convergence of the random variables to the solution of the mean-field equation has been widely studied by using martingale theory \cite{Bob:05, EthierKurtz, Kurtz1970}. Uniform convergence on bounded time intervals was proved in \cite{simon2013exact} by introducing an infinite system of ODEs for the moments. In \cite{batkai2012differential} this uniform convergence was also proved by using the approximation theory of operator semigroups and the authors showed that the difference is of order $1/N$. However, if we want to approximate the whole probability distributions, a PDE-based approach is needed. The approximation is based on a two-variable function $u$ for which $u(t, k/N) \approx p_k(t)$ and the Fokker-Planck equation. Then the master equations can be considered to be the discretisation of the Fokker-Planck equation in an appropriate sense. This was presented in \cite{batkai2015pde}, but since the main focus was a different type of second order PDE, very little detail was provided for the case we wish to study here, and some statements were inaccurately formulated. In \cite{KK-S2016}, an overview of the main building blocks of the proofs were given, but the details were left for this present paper. We therefore mostly follow the notations and proof structure introduced there.

In Section 2 the master equations and the corresponding Fokker-Planck equation are formulated, and we state our results. The  relation between the Fokker-Planck equation and the master equations is treated in Section 3, whilst the possible improvements are presented in Section 4.

\section{Model formulation}

Following the papers \cite{batkai2015pde, KK-S2016}, we will use systems with only two particle states $Q$ and $T$. Thus the state space of the whole system will be $\{ 0,1, \ldots , N\}$, where $k$ represents the state with $k$ particles in state $Q$ and $N-k$ particles in state $T$, that is $X_Q(t)=k$ and $X_T(t)=N-k$. Transition from state $k$ is possible only to states $k+1$ and $k-1$ with rates $a_k$ and $c_k$, respectively. Denoting by $p_k(t)$ the probability of state $k$ at time $t$ and assuming that the process is Markovian, the master equations of the process take the form
\begin{equation}
    \dot{p}_k= a_{k-1}p_{k-1}-(a_{k}+c_{k})p_k+c_{k+1}p_{k+1},\quad k=0,\ldots,  N . \label{eqKolm}
\end{equation}
Note that the equation corresponding to $k=0$ does not contain the first term in the right hand side, while the one corresponding to $k=N$ does not contain the third term, i.e., $a_{-1}=c_{N+1}=0$. Moreover, the Markov chain requires that $a_N$ and $c_0$ are set to zero. This will ensure that the sum of each column in the transition matrix is zero.

The infinite size limit, i.e. the case when $N\to \infty$, can be described by differential equations in 
The so-called density dependent case corresponds to when the transition rates $a_k$ and $c_k$ can be given by non-negative, continuous functions $A,C:[0,1]\to [0,+\infty)$ satisfying $A(1)=0=C(0)$ as follows
\begin{equation}
\frac{a_k}{N} = A\left( \frac{k}{N}\right)  \quad \mbox{ and } \quad \frac{c_k}{N} = C\left( \frac{k}{N}\right) . \label{dendep}
\end{equation}
We note that the conditions $A(1)=0=C(0)$ ensure $a_N=0=c_0$, whereas $a_1$ and $c_{N+1}$ are automatically "ignored" by restricting the correspondence to the interval $[0,1]$. Our aim here is estimate the error of the corresponding approximating Fokker-Planck equations \cite{risken2012fokker, van1992stochastic}.

The Fokker-Plank equation of the one-step-process given by density dependent coefficients is the following.
\begin{equation}
\partial_t u(t,z) = \frac{1}{2N} \partial_{zz} ((A(z) +C(z))u(t,z)) - \partial_{z} ((A(z)-C(z))u(t,z))
\label{FPdens}
\end{equation}
subject to boundary conditions
\begin{equation}\label{densbc1}
\delta \partial_z((A+C) u)(-\delta,t)-((A-C) u)(-\delta,t)=0,
\end{equation}
\begin{equation}\label{densbc2}
\delta \partial_z((A+C) u)(1+\delta,t)-((A-C) u)(1+\delta,t)=0,
\end{equation}
where $\delta = 1/2N$, and satisfies the initial condition
\begin{equation}
u(0,z) = u_0(z) \label{ICFP}
\end{equation}
for $z\in [0,1]$, where the initial function $u_0$ corresponds to the initial condition $p_k(0)$ in the sense that $u_0(\ell/N)=p_\ell(0)$ for all $0\leq \ell\leq N$.

For the derivation of this equation, we refer to \cite{KK-S2016}.

\subsection{Approximation results}

The solution $v$ of the Fokker-Plank equation was introduced as an approximation of the distribution $p_k$ in the sense that $v(t,k/N)\approx p_k(t)$. In \cite{batkai2015pde} Theorem 4.6 states that this is an order $1/N^2$ approximation on finite time intervals. However, the proof in fact yields a weaker result, as the definition of one of the norms absorbs a factor $1/N$, leading to a loss of one order, see Lemma \ref{le:norm_op} later in this paper. The correct statement is Corollary \ref{corFP} below.
\noindent We consider an initial function $u_0^N$ that is essentially the same for each $N$, and set
\begin{equation}
p_k(0):=\frac{1}{Q_N} u_0^N\left( \frac{k}{N} \right), \mbox{ with } Q_N=\sum_{k=0}^N u_0^N\left( \frac{k}{N} \right) \label{QN}
\end{equation}
as the initial condition for the ODE system, where the normalization constant ensures that $(p_0,p_1, \ldots , p_N)$ is a probability distribution. In order to estimate the accuracy of the Fokker-Planck equation precisely, we will need the following assumptions on the coefficient functions $A$ and $C$ and on the initial condition $u_0$.\\
The assumptions on the coefficient functions ensure that the obtained PDE is not degenerate and is compatible with the natural requirements of $a_N=c_0=0$ in the master equations.

\begin{assumptions}\label{ass:AC}
Let $A, C \in C^4[-\eta,1+\eta]$ with some $\eta>0$ satisfying $A+C>0$, $A(1)=C(0)=0$. Assume that $A-C$ is positive on $[-\eta,0]$ and negative on $[1,1+\eta]$. Moreover, let $N_0>1/2\eta$ be a positive integer such that
$2N_0|A(x)-C(x)|>|A'(x)+C'(x)|$ for all $x\in[-\eta,0]\cup[1,1+\eta]$.
\end{assumptions}

\noindent The assumptions on the initial functions reflect the wish to have a common, unique initial function when we restrict ourselves to the relevant domain $[0,1]$.

\begin{assumptions}\label{ass:u0}
Let $u_0\in C^2[0,1]$ be a non-negative function satisfying $u_0(0)=u_0'(0)=u_0(1)=u_0'(1)=0$, and let $u_0^N\in C^2[-h,1+h]$ be obtained as its extension as constant $0$ outside of $[0,1]$.
\end{assumptions}

\begin{theo} \label{thmFP}
Suppose that Assumptions \ref{ass:AC} and \ref{ass:u0} hold.
Let the coefficients of \eqref{eqKolm} be given by \eqref{dendep}, and let $q_k$ be the solution of \eqref{eqKolm} satisfying the initial conditions $q_{k}(0)=u_0(k/N)$ for all $k \in \{ 0,1,2,\ldots ,N\}$.
Let $u^N$ be the solution of the Fokker-Planck equation \eqref{FPdens} subject to the boundary condition \eqref{densbc1}-\eqref{densbc2} and initial condition $u^N(0,\cdot)=u_0^N(\cdot)$. Then for any $t_0>0$ there exists a constant $K$ independent of $N$, for which
$$\left|u^N\left(t,\frac{k}{N}\right) - q_k(t)\right|\leq K,\quad  t\in[0,t_0], \quad k=0,1,2,\ldots , N. $$
\end{theo}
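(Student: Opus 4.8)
The plan is to recast both evolutions as abstract Cauchy problems and compare the two semigroups by a quantitative Lax-type argument --- consistency together with stability, run through the variation-of-constants formula. Write the master equations \eqref{eqKolm} as $\dot q = B_N q$ on $\mathbb{R}^{N+1}$, with $B_N$ the tridiagonal matrix assembled from $a_{k-1},\,-(a_k+c_k),\,c_{k+1}$. Its columns sum to zero and its off-diagonal entries are non-negative, so $B_N$ generates a positive semigroup $(T_N(t))_{t\ge 0}$ which is a contraction for the norm $\|(f_k)\| := \tfrac1N\sum_{k=0}^N|f_k|$, \emph{uniformly in $N$}; this is the stability input, and the factor $\tfrac1N$ in this norm is precisely the normalisation that absorbs a power of $N$, cf.\ Lemma \ref{le:norm_op}. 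On the PDE side, rewriting the right-hand side of \eqref{FPdens} as $\partial_z$ of the flux $\tfrac1{2N}((A+C)u)' - (A-C)u$ shows that \eqref{densbc1}--\eqref{densbc2} say exactly that this flux vanishes at $-\delta$ and at $1+\delta$; thus \eqref{FPdens}--\eqref{densbc2} take the form $\partial_t u = \mathcal A_N u$, with $\mathcal A_N$ generating a positive, mass-preserving, $L^1$-contractive semigroup $(S_N(t))_{t\ge 0}$ on $L^1[-\delta,1+\delta]$, again uniformly in $N$ (well-posedness and positivity are where $A+C>0$ and the sign of $A-C$ near the endpoints in Assumption \ref{ass:AC} enter). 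Let $P_N\colon C[-\delta,1+\delta]\to\mathbb{R}^{N+1}$ be the grid sampling $u\mapsto(u(k/N))_{k}$; by Assumption \ref{ass:u0} one has $P_N u_0^N = (q_k(0))_k$, so there is no initial error.

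The consistency estimate comes from Taylor expansion. Writing $a_{k-1}u((k-1)/N) - a_k u(k/N) = N[(Au)((k-1)/N) - (Au)(k/N)]$ and $c_{k+1}u((k+1)/N) - c_k u(k/N) = N[(Cu)((k+1)/N) - (Cu)(k/N)]$ via \eqref{dendep}, the $O(1)$ contributions assemble to $-\partial_z((A-C)u)$ and the $O(1/N)$ ones to $\tfrac1{2N}\partial_{zz}((A+C)u)$ --- which is why \eqref{FPdens} carries that coefficient --- leaving at an interior node $k$ the remainder $-\tfrac1{6N^2}((A-C)u)'''(k/N)$ plus a term of size $O(N^{-3})$ times a local $C^4$-norm of $u$ (here $A,C\in C^4$ is used). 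At the end nodes $k=0,N$ only a one-sided expansion is available; there the no-flux relations \eqref{densbc1}--\eqref{densbc2} and the half-step offset $\delta=1/2N$ show the defect is of the same order. Summing over $k$ and replacing Riemann sums by integrals,
\[
\big\|B_N P_N u - P_N \mathcal A_N u\big\| \;\le\; \frac{C}{N^2}\,\big\|\big((A-C)u\big)'''\big\|_{L^1} \;+\; \frac{C}{N^3}\,\|u\|_{W^{4,1}} ,
\]
with $C$ depending only on $\|A\|_{C^4},\|C\|_{C^4}$.

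Differentiating $s\mapsto T_N(t-s)P_N S_N(s)u_0^N$ yields the variation-of-constants identity
\[
P_N S_N(t)u_0^N - T_N(t)P_N u_0^N \;=\; \int_0^t T_N(t-s)\big(P_N\mathcal A_N - B_N P_N\big)S_N(s)u_0^N \dd s ,
\]
so, using the uniform contractivity of $T_N$ and the consistency bound,
\[
\big\|P_N u^N(t,\cdot) - q(t)\big\| \;\le\; \int_0^{t}\!\Big(\frac{C}{N^2}\big\|\big((A-C)u^N(s,\cdot)\big)'''\big\|_{L^1} + \frac{C}{N^3}\big\|u^N(s,\cdot)\big\|_{W^{4,1}}\Big)\dd s .
\]
Since $\max_k\big|u^N(t,k/N) - q_k(t)\big| \le N\,\|P_N u^N(t,\cdot) - q(t)\|$ --- the factor $N$ being exactly the one hidden in $\|\cdot\|$, and the reason the $O(N^{-2})$ claimed in \cite{batkai2015pde} is in truth only $O(N^{-1})$, cf.\ Lemma \ref{le:norm_op} --- the theorem reduces to the a priori bounds
\[
\sup_{s\le t_0}\big\|\big((A-C)u^N(s,\cdot)\big)'''\big\|_{L^1} = O(N), \qquad \sup_{s\le t_0}\big\|u^N(s,\cdot)\big\|_{W^{4,1}} = O(N^2).
\]

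Establishing these a priori bounds is the main obstacle, and it is the step that \cite{batkai2015pde, KK-S2016} only sketched. The trouble is that $\mathcal A_N$ degenerates as $N\to\infty$ (its diffusion coefficient is $O(1/N)$) and its solutions concentrate at the fluctuation scale $N^{-1/2}$, so the spatial derivatives of $u^N$ genuinely grow, $\|\partial_z^j u^N(s,\cdot)\|_{L^1}$ being of order $N^{j/2}$; this already gives the $W^{4,1}$ bound, but for $\big((A-C)u^N\big)'''$ it would give only $O(N^{3/2})$. What rescues the first bound is that the factor $A-C$ in the leading residual vanishes precisely where $u^N$ concentrates --- at the stable zero of $A-C$ in $(0,1)$, which exists because $A(0)-C(0)=A(0)>0$ while $A(1)-C(1)=-C(1)<0$ --- so $\big((A-C)u^N\big)'''$ effectively behaves like $\partial_z^2 u^N$, of order $N$. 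I would make this rigorous by differentiating \eqref{FPdens} in $z$ up to the needed order and running weighted $L^1$ a priori estimates for $\partial_z^j u^N$: the drift term $-\partial_z((A-C)u)$, together with the sign of $A-C$ near the endpoints and the inequality $2N_0|A-C|>|A'+C'|$ there (which make the moving endpoints $-\delta,1+\delta$ contribute damping rather than growth), supplies dissipation compensating the degenerate diffusion term, while $A+C>0$ and the $C^4$-regularity control the commutators and the lower-order terms, and the conditions $u_0(0)=u_0'(0)=u_0(1)=u_0'(1)=0$ furnish the endpoint compatibility needed for the higher-order estimates to close. Tracking the powers of $N$ through these estimates is what pins down the order $O(1)$ in the statement --- equivalently, after dividing by $Q_N\sim N\int_0^1 u_0$, the order $O(1/N)$ in Corollary \ref{corFP}.
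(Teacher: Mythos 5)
Your strategy is genuinely different from the paper's: you work in a (normalised) $\ell^1$/$L^1$ framework and use the unconditional $\ell^1$-contractivity of the stochastic semigroup and the $L^1$-contractivity of the no-flux Fokker--Planck semigroup, while the paper works entirely in sup norms with quasi-contractivity (Lemma \ref{le:exp_bound}), and then compensates the $O(1/N)$ consistency of Lemma \ref{le:num_bound} by showing that the weighted $C^2$ norm of $u^N(t,\cdot)$ stays $O(N)$ via a carefully-designed graph norm $Y_N$ and a $C^1$-bound on an intermediate space $Z_N$ (Lemmas \ref{le:semigroup}, \ref{le:exp_bound2}, \ref{le:norm_op}). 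The structural parts of your argument --- the variation-of-constants identity, the Taylor-based consistency, the crude $\|\cdot\|_\infty\leq N\,\|\cdot\|_{\ell^1_N}$ conversion --- are all fine, and the $\ell^1$ stability is actually cleaner than the paper's, in that it needs no exponential reweighting.

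The gap is the pair of a priori bounds you flag yourself as ``the main obstacle'': $\sup_{s\leq t_0}\|((A-C)u^N(s,\cdot))'''\|_{L^1}=O(N)$ and $\sup_{s\leq t_0}\|u^N(s,\cdot)\|_{W^{4,1}}=O(N^2)$. As written, these are not proved, and the sketch offered does not survive scrutiny. First, the ``concentration at scale $N^{-1/2}$ around the stable zero of $A-C$'' is the equilibrium picture; on a fixed finite time interval the solution has not equilibrated, and neither the existence/uniqueness of a stable zero of $A-C$ in $(0,1)$ nor its transversality are guaranteed by Assumption \ref{ass:AC} (a sign change alone is not enough for the quantitative vanishing you need). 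Second, and decisively, Assumption \ref{ass:u0} only gives $u_0\in C^2$. A $W^{4,1}$ estimate therefore relies on parabolic smoothing, but with diffusion coefficient of size $1/N$ the gain of two derivatives from $C^2$ data costs a factor of order $N/s$, so $\|u^N(s,\cdot)\|_{W^{4,1}}$ is not uniformly $O(N^2)$ but rather of order $N/s$ near $s=0$; after multiplying by $C/N^3$ this leaves a non-integrable $1/s$ singularity in your Duhamel integral, and the argument does not close. The paper's proof avoids precisely this: it never asks for more than two derivatives of $u^N$, obtains the needed $O(N)$ control of $\|(A\pm C)u^N(t,\cdot)\|_{C^2}$ from the boundedness of $\|u^N(t,\cdot)\|_{Y_N}$ and $\|u^N(t,\cdot)\|_{Z_N}$ (which requires only $u_0\in D(\mc{A}_N)\cap Z_N$), and is thereby compatible with the stated $C^2$ regularity of $u_0$. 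To rescue your $L^1$ approach you would at minimum have to strengthen the regularity of $u_0$ or lower the Taylor order at which you expand, and in any case produce an actual weighted-$L^1$ energy estimate for $\partial_z^j u^N$ rather than a heuristic.
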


The proof, which is based on a Trotter-Kato type result in the context of operator semigroups, is shown in Section \ref{sec:OpSemigroup}.

Note that $q_k$ is not a proper distribution since $\sum q_k \neq 1$, however, the above theorem translates easily to a statement concerning the distribution $p_k$ determined by \eqref{eqKolm}. Namely, $Q_N$, given in \eqref{QN}, relates $q_k$ to $p_k$ and $u^N$ to $v$ as follows. The functions $q_k$ and $p_k$ are solutions of the same system of linear differential equations \eqref{eqKolm}, and they are scalar multiples of each other. According to \eqref{QN} the relation $p_k=q_k/Q_N$ holds for all $k$. Similarly, $u^N$ and $v$ are solutions of the Fokker-Planck equation \eqref{FPdens} belonging to different initial conditions, hence they are related through $v(t,z)=u^N(t,z)/Q_N$. Moreover,
\[
Q_N=\sum_{\ell=0}^N u_0^N(\ell/N)=\sum_{\ell=0}^N u_0(\ell/N)=N\int_0^1 u_0 +o(N),
\]
hence we have the approximation of order $1/N$ for $p(t)$ and $v(t,z)$:
\[
\max_{0\leq k\leq N}\left|v(t,k/N)-p_k(t)\right| = \max_{0\leq k\leq N}\left|\frac{u^N(t,k/N)}{Q_N}-\frac{q_k(t)}{Q_N}\right|\leq \frac{K}{Q_N}\leq \frac{K'}{N}.
\]
Also, the boundary of the Fokker-Planck equation was chosen in such a way that the integral of the function $u^N$ is constant in time. Therefore, the approximation result given in Theorem \ref{thmFP} yields the following estimate between $p_k$ and $v$.

\begin{cor} \label{corFP}
Suppose that Assumptions \ref{ass:AC} and \ref{ass:u0} hold.
Let the coefficients of \eqref{eqKolm} be given by \eqref{dendep}, and let $p_k$ be the solution of \eqref{eqKolm} satisfying the initial conditions $p_{k}(0)=u_0(k/N)/Q_N$ for all $k \in \{ 0,1,2,\ldots ,N\}$, with $Q_N$ given in \eqref{QN}. Let $v$ be the solution of the Fokker-Planck equation \eqref{FPdens} subject to the boundary condition \eqref{densbc1}-\eqref{densbc2} and initial condition $v(0,\cdot)=u_0^N(\cdot)/Q_N$. Then for any $t_0>0$ there exists a constant $K$ independent of $N$, for which
$$\left|v\left(t,\frac{k}{N}\right) - p_k(t)\right|\leq\frac{K}{N},\quad  t\in[0,t_0], \quad k=0,1,2,\ldots , N. $$
\end{cor}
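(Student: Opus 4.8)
The plan is to deduce Corollary \ref{corFP} from Theorem \ref{thmFP} by exploiting the linearity of both evolution equations. Let $q_k$ and $u^N$ denote the solutions figuring in Theorem \ref{thmFP}, i.e.\ with the unnormalized initial data $q_k(0) = u_0(k/N)$ and $u^N(0,\cdot) = u_0^N(\cdot)$. The master equations \eqref{eqKolm} form a homogeneous linear system of ODEs, hence have unique solutions depending linearly on the initial vector; since $p_k(0) = q_k(0)/Q_N$, we get $p_k(t) = q_k(t)/Q_N$ for all $t\ge 0$ and all $k$. Likewise, the Fokker--Planck problem \eqref{FPdens} with the boundary conditions \eqref{densbc1}--\eqref{densbc2} is linear and well posed (this is exactly the $C_0$-semigroup setup of Section \ref{sec:OpSemigroup}), and $v(0,\cdot) = u_0^N(\cdot)/Q_N = u^N(0,\cdot)/Q_N$, so $v(t,z) = u^N(t,z)/Q_N$ throughout. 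Thus the pair compared in the corollary is precisely the $Q_N^{-1}$-multiple of the pair compared in the theorem.

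Dividing the estimate of Theorem \ref{thmFP} by the positive number $Q_N$ then gives, with the same $N$-independent $K$, for all $t \in [0,t_0]$ and $0 \le k \le N$,
\[
\left| v\!\left(t,\tfrac{k}{N}\right) - p_k(t)\right| \;=\; \frac{1}{Q_N}\left| u^N\!\left(t,\tfrac{k}{N}\right) - q_k(t)\right| \;\le\; \frac{K}{Q_N}.
\]
It remains only to show that $Q_N$ grows at least linearly in $N$, so that $K/Q_N = O(1/N)$.

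For this I would argue as follows. Under Assumptions \ref{ass:u0} the function $u_0$ is continuous and non-negative, and for the initial condition in the corollary to be meaningful it must not vanish identically, so $m := \int_0^1 u_0(z)\dd z > 0$. By \eqref{QN} and the fact that $u_0^N$ agrees with $u_0$ on $[0,1]$, we have $Q_N/N = \frac1N\sum_{\ell=0}^N u_0(\ell/N)$, which is a Riemann sum of $u_0$ over $[0,1]$ (the endpoint terms $\ell=0,N$ contribute nothing since $u_0(0)=u_0(1)=0$) and therefore converges to $m$ as $N\to\infty$. Hence there is an $N_1$ with $Q_N \ge mN/2$ for all $N \ge N_1$, which yields $|v(t,k/N) - p_k(t)| \le (2K/m)\,N^{-1}$ for such $N$; enlarging the constant to absorb the finitely many $N < N_1$ (for which $Q_N > 0$) produces the claimed $K'/N$ bound.

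There is no real obstacle here: all the analytic substance is in Theorem \ref{thmFP}, and the corollary is essentially a rescaling together with the elementary lower bound on $Q_N$. The only point deserving explicit mention is the hypothesis $u_0 \not\equiv 0$ needed for that lower bound (and for the initial conditions to be defined at all); under Assumptions \ref{ass:u0} this is innocuous. One may also recall, as noted before the corollary, that the boundary conditions \eqref{densbc1}--\eqref{densbc2} are designed so that $\int u^N(t,\cdot)$ is constant in time, which is what makes $v = u^N/Q_N$ a genuine approximation of the probability distribution $(p_k)$ and explains why normalizing by $Q_N$ is the right choice --- though this conservation property is not itself needed for the $O(1/N)$ estimate.
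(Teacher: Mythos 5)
Your argument is correct and is essentially the paper's own: both deduce the corollary from Theorem~\ref{thmFP} by noting that $p_k = q_k/Q_N$ and $v = u^N/Q_N$ by linearity of \eqref{eqKolm} and \eqref{FPdens}, and that $Q_N = N\int_0^1 u_0 + o(N)$ grows linearly in $N$, so dividing the $O(1)$ bound by $Q_N$ yields $O(1/N)$. Your explicit remark that $u_0 \not\equiv 0$ is needed to ensure $Q_N > 0$ (and hence the lower bound $Q_N \ge mN/2$ for large $N$) is a fair clarification of a point the paper leaves implicit.
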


\section{Proof of the main result}  \label{sec:OpSemigroup}

The system of ODEs \eqref{eqKolm} can be written in the form $\dot p =A_N p$, where $p=(p_0,p_1,\ldots , p_N)^T$ and $A_N$ is a tri-diagonal matrix. The solution of the system can be given as $p(t)=T_N(t)p(0)$, where $T_N(t)=\exp(A_N t)$ is an operator semigroup on $\mathbb{R}^{N+1}$. (We note that it is extendable to $\mathbb{C}^{N+1}$ in a the usual way.) We will show that the solution of the Fokker-Planck equation \eqref{FPdens} can also be given by using an operator semigroup as $u(t,\cdot) = S_N(t) u(0,\cdot)$. Then we estimate the difference of the solutions by using a Trotter-Kato type result claiming that the semigroups are close to each other if this is known about their generators.

The generator of $S_N$ is given by the right hand side of the Fokker-Planck equation and will be denoted as
\begin{equation}
\mc{A}_N f = \frac{1}{2N}((A+C)f)''-((A-C)f)' . \label{opAN}
\end{equation}
Carrying out the differentiations and using the boundary conditions \eqref{densbc1}-\eqref{densbc2}, we get that the domain of this operator is the following subspace of the space of twice continuously differentiable functions
\begin{equation*}
D(\mc{A}_N):=\big\{f\in C^2[-h,1+h] \,: \frac{1}{2N}((A+C)f)'(z)-((A-C)f)(z)=0 \text{ for } z=-h, 1+h\big\} ,
\end{equation*}
where $h=-1/2N$. Now we introduce the general framework.

\subsection{Perturbation result in the abstract setting}

\begin{assumptions}\label{c:apro1.ass:approx_space}
Let $\mc{X}_n$, $X_n$ ($n\in\mb{N}^+$) be Banach spaces and assume that $P_n:\mc{X}_n\to X_n$ are bounded linear operators with $\|P_n\|\leq K$ for some constant $K>0$. Suppose that the operators $A_n$, $\mc{A}_n$ generate strongly continuous semigroups $(T_n(t))_{t\geq 0}$ and $(S_n(t))_{t\geq 0}$ on $X_n$ and $\mc{X}_n$, respectively, and that there are constants $M\geq 0$, $\omega\in\mathbb{R}$ such that the stability condition
\begin{equation}\label{c:apro1.eq:stability}
\|T_n(t)\|\leq Me^{\omega t} \qquad \text{ holds for all } t\geq 0.
\end{equation}
\end{assumptions}

Under these assumptions, we have the following Trotter-Kato type approximation result (cf., e.g., \cite[Proposition 3.8]{Batkai-Csomos-Farkas-Ostermann}).

\begin{prop}\label{prop:appr_first_gen}
Suppose that Assumptions \ref{c:apro1.ass:approx_space} hold, that there is a dense subset $Y_n\subset D(\mc{A}_n)$  invariant under the semigroup $S_n$ such that $P_nY_n\subset D(A_n)$, and that $Y_n$ is a Banach space with some norm $\|\cdot\|_{Y_n}$ satisfying
\begin{equation*}
\|S_n(t)\|_{Y_n} \leq M e^{\omega t}.
\end{equation*}
Let further $f\in Y_n$. If there exists a constant $p\geq 0$ with the property that for any $\tau\geq 0$ there exists a $C>0$ such that for all $\tau\geq t\geq 0$ the estimate
\begin{equation}\label{eqn:Gen}
\|A_nP_n S_n(t)f - P_n\mc{A}_nS_n(t)f\|_{X_n}\leq C\frac{\|S_n(t)f\|_{Y_n}}{n^p},
\end{equation}
holds, then for each $\tau\geq 0$ there exists some $C'>0$ such that
\begin{equation*}
\|T_n(t)P_n f - P_nS_n(t)f\|_{X_n}\leq C'\frac{\|f\|_{Y_n}}{n^p}
\end{equation*}
for all $0\leq t\leq\tau$, where $C'$ depends only on $C,\tau,M$ and $\omega$.
\end{prop}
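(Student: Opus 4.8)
The plan is to establish a Duhamel-type telescoping identity for the difference $T_n(t)P_nf-P_nS_n(t)f$ and then bound it using the two stability estimates. Fix $n$, a horizon $\tau\geq0$, an element $f\in Y_n$, and a time $t\in[0,\tau]$. Since $Y_n$ is invariant under $S_n$ and $Y_n\subset D(\mc{A}_n)$, the orbit $s\mapsto S_n(s)f$ is continuously differentiable in $\mc{X}_n$ with derivative $\mc{A}_nS_n(s)f=S_n(s)\mc{A}_nf$; composing with the bounded operator $P_n$, the curve $v(s):=P_nS_n(s)f$ is continuously differentiable in $X_n$ with $v'(s)=P_n\mc{A}_nS_n(s)f$, and moreover $v(s)\in P_nY_n\subset D(A_n)$ for every $s\in[0,t]$. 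Set
$$g(s):=T_n(t-s)\,v(s),\qquad s\in[0,t],$$
so that $g(0)=T_n(t)P_nf$ and $g(t)=P_nS_n(t)f$, and the quantity to be estimated equals $g(0)-g(t)$.

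Next I would differentiate $g$. Writing the difference quotient as
$$\frac{g(s+\varepsilon)-g(s)}{\varepsilon}=T_n(t-s-\varepsilon)\,\frac{v(s+\varepsilon)-v(s)}{\varepsilon}-T_n(t-s-\varepsilon)\,\frac{T_n(\varepsilon)-I}{\varepsilon}\,v(s)$$
and letting $\varepsilon\to0$ — using that $v$ is differentiable, that $v(s)\in D(A_n)$, and that $(T_n(r))_{r\geq0}$ is strongly continuous and uniformly bounded on compacts — one obtains that $g$ is differentiable with
$$g'(s)=T_n(t-s)\big(v'(s)-A_nv(s)\big)=-T_n(t-s)R_n(s),\qquad R_n(s):=A_nP_nS_n(s)f-P_n\mc{A}_nS_n(s)f,$$
which is precisely the consistency error appearing in \eqref{eqn:Gen}. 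Provided $s\mapsto R_n(s)$ is continuous, so that $g\in C^1([0,t];X_n)$, the fundamental theorem of calculus gives
$$T_n(t)P_nf-P_nS_n(t)f=g(0)-g(t)=\int_0^tT_n(t-s)R_n(s)\dd s.$$

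Finally I would estimate in $X_n$, applying in turn the stability bound \eqref{c:apro1.eq:stability}, the consistency estimate \eqref{eqn:Gen}, and the growth bound $\|S_n(s)f\|_{Y_n}\leq Me^{\omega s}\|f\|_{Y_n}$:
$$\|T_n(t)P_nf-P_nS_n(t)f\|_{X_n}\leq\int_0^tMe^{\omega(t-s)}\,\frac{C\,\|S_n(s)f\|_{Y_n}}{n^p}\dd s\leq\frac{M^2C\,\|f\|_{Y_n}}{n^p}\int_0^te^{\omega t}\dd s\leq C'\,\frac{\|f\|_{Y_n}}{n^p}$$
for all $t\in[0,\tau]$, with $C':=M^2C\tau\max\{1,e^{\omega\tau}\}$, which depends only on $C,\tau,M,\omega$, as claimed. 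The one delicate point is the differentiation step together with the regularity of $s\mapsto R_n(s)$ needed to legitimise the integral representation of $g(0)-g(t)$; this continuity is immediate when $A_n$ is a bounded operator — which is the situation relevant to this paper, where $A_n$ is the finite tridiagonal matrix associated with \eqref{eqKolm}, so that $A_nP_n$ is bounded and $s\mapsto A_nP_nS_n(s)f$ is continuous — while in the fully general abstract setting it is handled as in \cite{Batkai-Csomos-Farkas-Ostermann}. Everything beyond the integral representation is the displayed two-line estimate.
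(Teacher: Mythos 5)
Your proposal is correct and follows essentially the same route as the paper: you write $T_n(t)P_nf-P_nS_n(t)f$ as $\int_0^tT_n(t-s)\bigl(A_nP_n-P_n\mc{A}_n\bigr)S_n(s)f\dd s$ via a Duhamel/telescoping identity and then apply the stability and consistency bounds inside the integral, exactly as in the text. The only difference is that you spell out the difference-quotient argument for differentiating $g(s)=T_n(t-s)P_nS_n(s)f$ and flag the continuity of $s\mapsto R_n(s)$ needed to integrate, points the paper passes over silently; your final constant $C'=M^2C\tau\max\{1,e^{\omega\tau}\}$ is a slightly sharper (and equally valid) form of the paper's $M^2C\tau e^{\tau|\omega|}$.
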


The statement can be verified as follows. Let $f\in Y_n$, then the function $[0,t]\ni s\mapsto T_n(t-s)P_nS_n(s)f$ is continuously differentiable with derivative
\[
T_n(t-s)P_n\mc{A}_nS_n(s)f-T_n(t-s)A_nP_nS_n(s)f=T_n(t-s)(P_n\mc{A}_n-A_nP_n)S_n(s)f,
\]
and the fundamental theorem of calculus yields
\[
P_nS_n(t)f-T_n(t)P_nf=\int_0^t T_n(t-s)(P_n\mc{A}_n-A_nP_n)S_n(s) f \,\mathrm{d}s.
\]
Hence we have
\begin{align*}
\|T_n(t)P_n f - P_nS_n(t)f\|_{X_n}& \leq  \int_0^t \|T_n(t-s)(P_n\mc{A}_n-A_nP_n)S_n(s) f\|_{X_n}\, \mathrm{d}s\\
& \leq\int_0^t Me^{\omega (t-s)} \|(P_n\mc{A}_n-A_nP_n)S_n(s) f\|_{X_n}\,\mathrm{d}s\\
& \leq\int_0^t Me^{\omega (t-s)} C\frac{\|S_n(s) f\|_{Y_n}}{n^p}\,\mathrm{d}s
\leq
\int_0^t Me^{\omega (t-s)} C\frac{Me^{\omega s}\|f\|_{Y_n}}{n^p}\,\mathrm{d}s\\
& \leq C'\frac{\|f\|_{Y_n}}{n^p}
\end{align*}

with $C'=M^2C\tau e^{\tau|\omega|}$.

\subsection{Proof of Theorem \ref{thmFP}}

Now we turn to how this abstract setting applies to our case. Let Assumptions \ref{ass:AC} and \ref{ass:u0} hold. For each $N>N_0$, choose $X_N:= (\mathbb{C}^{N+1},\|\cdot\|_\infty)$ and $\mc{X}_N:=C[-h,1+h]$, with $P_N$ projecting $f\in \mc{X}_n$ onto the vector
\[
(f(0),f(1/N),\ldots,f(1))^T\in X_N.
\]
Clearly $\|P_N\|=1$. Let further $A_N$ be the transition matrix pertaining to the system of equations (\ref{eqKolm}) and $T_N(t)=\exp(A_N t)$. The operator $(\mc{A}_N,D(\mc{A}_N))$ given by \eqref{opAN} generates the analytic operator semigroup $(S_N(t))_{t\geq 0}$ on $\mc{X}_N$ that gives the solutions of PDE \eqref{FPdens} with boundary conditions \eqref{densbc1}-\eqref{densbc2}, cf. \cite[Section VI.4.b]{EN:00}. Since the semigroup is analytic, it leaves $D(\mc{A}_N)$ invariant. Thus using the notations of Theorem \ref{thmFP} we have
$$
q_k(t)= (T_n(t)P_n u^N(0,\cdot))_k, \mbox{ and } u^N(t,k/N)= (P_nS_n(t)u^N(0,\cdot))_k .
$$
Thus the statement of the Theorem follows directly from Proposition \ref{prop:appr_first_gen}. Now we formulate and prove a series of lemmas that will allow us to verify that the conditions of Proposition \ref{prop:appr_first_gen} hold.
The first set of lemmas are about the growth bounds of the semigroups in question, together with some of their restricitons.

\begin{lem}\label{le:exp_bound}
There exists a constant $\mf{d}>0$ such that for any $d>\mf{d}$ and $N>N_0$, the following hold:
\begin{enumerate}
\item for all $t\geq0$ the following norms are all bounded from above by $1$:
\begin{align*}
& \|e^{-dt}T_N(t)\|_{X_N};\mbox{ }
  \|e^{-dt}S_N(t)\|_{\mc{X}_N};\mbox{ }
\end{align*}
\item the space $Y_N:=\left(D(\mc{A}_N),\|\cdot\|_{\mc{A}_N-dI}\right)$ is a Banach space with the norm
\[
\|f\|_{\mc{A}_N-dI}:=\|(\mc{A}_N-dI)f\|_{\mc{X}_N},
\]
and for all $t\geq 0$ we have
\[
\left\|\left.e^{-dt}S_N(t)\right|_{Y_N}\right\|_{Y_N}\leq 1
\]
\end{enumerate}
\end{lem}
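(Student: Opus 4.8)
The plan is to establish a uniform (in $N$) exponential growth bound for the three objects separately, then combine them. For the matrix semigroup $T_N(t) = \exp(A_N t)$ on $(\mathbb{C}^{N+1},\|\cdot\|_\infty)$, I would use the fact that $A_N$ is the generator of a (sub-)Markovian semigroup: the off-diagonal entries $a_{k-1}$ and $c_{k+1}$ are non-negative, and the column sums vanish by construction ($a_N = c_0 = 0$ ensures this at the boundary). By the density-dependence \eqref{dendep} and $A,C \in C^4$ hence bounded on $[0,1]$, we have $a_k + c_k \leq 2N\max(\|A\|_\infty,\|C\|_\infty)$, so the diagonal entries of $A_N$ are $-(a_k+c_k) \geq -2N\max(\ldots)$. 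A standard Gershgorin / dissipativity argument in $\ell^\infty$ then shows that $A_N - dN I$ is dissipative for $d$ a suitable multiple of $\max(\|A\|_\infty,\|C\|_\infty)$, giving $\|T_N(t)\|_\infty \leq e^{dNt}$. The catch is the factor $N$: to get a bound of the form $e^{dt}$ uniform in $N$, one must note that the relevant semigroups will ultimately be compared after time-rescaling, or — more likely in this paper's setup — that the operator $\mathcal{A}_N$ in \eqref{opAN} already carries the compensating $1/N$ and $O(1)$ drift, so the honest claim is an $N$-independent bound $e^{dt}$; I would check carefully whether the $1/2N$ normalization in \eqref{opAN} versus the $a_k \sim NA(k/N)$ scaling in \eqref{dendep} is consistent, since this is exactly the kind of place where, as the authors warn, ``a factor $1/N$'' gets absorbed. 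Assuming the scalings match, $\|e^{-dt}T_N(t)\|_{X_N} \leq 1$ follows.

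For $S_N(t)$ on $\mathcal{X}_N = C[-h,1+h]$, I would invoke that $(\mathcal{A}_N, D(\mathcal{A}_N))$ generates an analytic semigroup (cited from \cite[Section VI.4.b]{EN:00}) and establish a growth bound via the maximum principle for the parabolic operator $\mathcal{A}_N$ together with the Robin-type boundary conditions \eqref{densbc1}-\eqref{densbc2}. Writing $\mathcal{A}_N f = \frac{1}{2N}(A+C)f'' + b_N f' + c_N f$ after carrying out the differentiations, where $b_N, c_N$ are built from $A+C, A-C$ and their derivatives and are bounded uniformly in $N$ on the compact interval, one gets that $\mathcal{A}_N - dI$ is dissipative in $C[-h,1+h]$ for $d$ exceeding $\mathfrak{d} := \|c_N\|_\infty$ (uniform in $N>N_0$, using $A,C\in C^4$). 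The boundary conditions are of the form $\delta\,((A+C)f)' - (A-C)f = 0$ at the endpoints, which (using Assumption \ref{ass:AC}: $A-C$ positive at $-\eta$, negative at $1+\eta$, together with $2N_0|A-C| > |A'+C'|$) are oriented so that the outward normal derivative of $(A+C)f$ has the right sign at an interior extremum, so the maximum principle goes through and yields $\|e^{-dt}S_N(t)\|_{\mathcal{X}_N} \leq 1$. This uses Assumption \ref{ass:AC} essentially, and is where I expect the real work: the boundary-condition sign analysis is the crux.

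For part (2), that $Y_N = (D(\mathcal{A}_N), \|\cdot\|_{\mathcal{A}_N - dI})$ is a Banach space is formal: $\mathcal{A}_N - dI$ is invertible for $d$ large (since $\mathcal{A}_N$ generates a semigroup, its resolvent set contains a half-line), so $\|f\|_{\mathcal{A}_N - dI} = \|(\mathcal{A}_N - dI)f\|$ is an equivalent graph norm making $D(\mathcal{A}_N)$ complete by closedness of $\mathcal{A}_N$. For the growth bound on the restriction, the key observation is that $S_N(t)$ commutes with $(\mathcal{A}_N - dI)$ on $D(\mathcal{A}_N)$ — the semigroup commutes with its generator — so for $f \in Y_N$ we have
\[
\|S_N(t)f\|_{\mathcal{A}_N - dI} = \|(\mathcal{A}_N - dI)S_N(t)f\|_{\mathcal{X}_N} = \|S_N(t)(\mathcal{A}_N - dI)f\|_{\mathcal{X}_N} \leq e^{dt}\|(\mathcal{A}_N - dI)f\|_{\mathcal{X}_N} = e^{dt}\|f\|_{Y_N},
\]
using the bound from part (1) applied to the element $(\mathcal{A}_N - dI)f \in \mathcal{X}_N$. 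Hence $\|e^{-dt}S_N(t)|_{Y_N}\|_{Y_N} \leq 1$. The main obstacle throughout is getting all constants — $\mathfrak{d}$, the dissipativity threshold — genuinely independent of $N$ for $N > N_0$, which rests on the $C^4$ hypothesis bounding the drift and zeroth-order coefficients uniformly and on the boundary conditions \eqref{densbc1}-\eqref{densbc2} being compatible (via Assumption \ref{ass:AC}) with the maximum principle; once that is in hand, everything else is the standard semigroup bookkeeping sketched above.
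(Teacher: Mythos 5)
Your proposal for the $S_N$ bound and for Part (2) matches the paper's approach: the $S_N$ bound comes from a complex-valued maximum principle at an interior extremum (with Assumption \ref{ass:AC} used to exclude the boundary points), and the $Y_N$ bound is the routine commutation of $S_N(t)$ with $\mathcal{A}_N - dI$, both of which you sketch correctly.

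The genuine gap is in the $T_N$ bound, and it is not a cosmetic one. You compute that the diagonal entries of $A_N$ are of order $N$, conclude $\|T_N(t)\|_\infty \leq e^{cNt}$ with $c$ proportional to $\max(\|A\|_\infty,\|C\|_\infty)$, observe that this is too weak by a factor $N$ in the exponent, and then resolve the tension only by ``assuming the scalings match.'' They do match, but proving it requires a specific algebraic rearrangement that is the entire content of this part of the lemma: at an index $k$ where $|v_k| = \|v\|_\infty$, write
\[
(A_N v)_k
= v_k\Bigl[N\bigl(A(\tfrac{k}{N})-A(\tfrac{k-1}{N})\bigr)
          + N\bigl(C(\tfrac{k+1}{N})-C(\tfrac{k}{N})\bigr)\Bigr]
  + a_{k-1}(v_{k-1}-v_k) + c_{k+1}(v_{k+1}-v_k).
\]
Multiplying by $\overline{v_k}/|v_k|$ and taking real parts, the two difference terms are $\leq 0$ because $|v_{k\pm1}|\leq|v_k|$ and $a_{k-1},c_{k+1}\geq 0$, while the bracket is a first-order finite difference whose magnitude is at most $\|A'\|_\infty + \|C'\|_\infty$, \emph{not} $N\max(\|A\|_\infty,\|C\|_\infty)$. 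Equivalently: the $\ell^\infty$ dissipativity threshold is governed by the row-sum excess $a_{k-1}+c_{k+1}-(a_k+c_k)$, which telescopes to $O(1)$, not by the size of the diagonal entry. This is also where the correct constant $\mathfrak{d}$ (involving derivatives of $A$, $C$, not their sup-norms) comes from. Without carrying out this cancellation you have no proof of the $N$-uniform bound for $T_N$, and no time-rescaling or reinterpretation of $\mathcal{A}_N$ can substitute for it, since the claim is a bound on the discrete semigroup as it stands.

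A minor note on Part (2): you deduce that $Y_N$ is a Banach space from invertibility of $\mathcal{A}_N - dI$, which is fine; the paper instead reads off the coercivity inequality $\|(\mathcal{A}_N - dI)w\|_\infty \geq \alpha\|w\|_\infty$ directly from the maximum-principle computation already in hand. Both are correct, but the paper's route has the advantage of producing a quantitative lower bound that is reused later (Lemma \ref{le:semigroup}).
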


\begin{proof}
\underline{Part 1.}
First we shall show that for large enough $d_1$, we have
\[
 \|e^{-d_1t}T_N(t)\|_{X_N}\leq 1,
\]
independently of $N$. Note that this norm is simply the maximum norm on this finite dimesnional space. Let $v_0$ be an arbitrary initial vector, and let $v(t):=T_N(t)v(0)$. We have by definition
\begin{align*}
&\left((A_N-d_1I)v(t)\right)_k\\
=&NA\left(\frac{k-1}{N}\right)v_{k-1}(t)-
\left[
NA\left(\frac{k}{N}\right)+NC\left(\frac{k}{N}\right)+d_1
\right]v_k(t)
+NC\left(\frac{k+1}{N}\right)v_{k+1}(t)\\
=&v_k(t)
\left[
N\left(A\left(\frac{k}{N}\right)-A\left(\frac{k-1}{N}\right)\right)+
N\left(C\left(\frac{k+1}{N}\right)-C\left(\frac{k}{N}\right)\right)
-d_1
\right]\\
&+NA\left(\frac{k-1}{N}\right)\left(v_{k-1}(t)-v_k(t)\right)+NC\left(\frac{k+1}{N}\right)\left(v_{k+1}(t)-v_k(t)\right)
\end{align*}
for $1\leq k\leq N$. If $k=0$ or $k=N$, then either the terms with $A$ or the ones with $C$ become zero.
Fix $t> 0$, and suppose we have $|v_k(t)|=\|v(t)\|_{\infty}>0$, and consider the scalar product
\[
\left((A_N-d_1I)v(t)\right)_k\frac{\overline{v(t)_k}}{|v_k(t)|}.
\]
The following then holds, using that $A,C$ are both non-negative on $[0,1]$.
\begin{align*}
&\Re \left(\left((A_N-d_1I)v(t)\right)_k\frac{\overline{v(t)_k}}{|v_k(t)|}\right)\\
\leq&\frac{v_k(t)\overline{v(t)_k}}{|v_k(t)|}
\left[
N\left(A\left(\frac{k}{N}\right)-A\left(\frac{k-1}{N}\right)\right)\chi_{\{k>0\}}+
N\left(C\left(\frac{k+1}{N}\right)-C\left(\frac{k}{N}\right)\right)\chi_{\{k<N\}}
-d_1
\right]\\
=&|v_k(t)|
\left[
N\left(A\left(\frac{k}{N}\right)-A\left(\frac{k-1}{N}\right)\right)\chi_{\{k>0\}}+
N\left(C\left(\frac{k+1}{N}\right)-C\left(\frac{k}{N}\right)\right)\chi_{\{k<N\}}
-d_1
\right]\\
\leq&|v_k(t)|
\left[
\|A'\|_\infty+
\|C'\|_\infty
-d_1
\right]
\end{align*}
However, since $A_N-d_1I$ is the generator of the rescaled semigroup $e^{-d_1t}T_N(t)$, this means that if $d_1>\|A'\|_\infty+\|C'\|_\infty$, then the norm $\|(e^{-d_1t}T_N(t))v(0)\|_{X_N}=\|e^{-d_1t}v(t)\|_{\infty}$ is monotone decreasing for any $v(0)$, and we are done.

Next we turn to $\|e^{-d_2t}S_N(t)\|_{\mc{X}_N}$. It is known that $\mc{A}_N$ generates an analytic semigroup (cf. e.g. \cite[Thm. VI.4.6]{EN:00}), hence for any $v_0\in \mc{X}_N$ and $t>0$ we have $S_n(t)v_0=:v_t\in D(\mc{A}_N)$. Now assume that $v_0\neq 0$, fix $t> 0$ and suppose that $s\in [-h,1+h]$ is such that $|v_t(s)|=\|v_t\|_{\mc{X}_N}$. As above, our aim is to show that for large enough $d_2$ independent of $t$, the values of the derivative $(\mc{A}_N-d_2I)v_t$ and of the function $\overline{v}_t$ in $s$ have a negative scalar product (as vectors in $\mb{C}$), and hence
\[
\|(e^{-d_2t}S_N(t))v_0\|_{\mc{X}_N}=\|e^{-d_2t}v_t\|_{\infty}
\]
will be monotone decreasing in time. For ease of notation, we from now on write $w:=v_t$. We first show that $s$ has to be an interior point of the interval. Indeed, since $w\in D(\mc{A}_N)$, the function $w$ satisfies the boundary conditions, and we have at the left boundary that
\begin{align*}
0&=\frac{1}{2N}((A+C)w)'(-h)-((A-C)w)(-h)\\&=\frac{(A+C)(-h)}{2N}w'(-h)-\left[(A-C)(-h)-\frac{(A+C)'(-h)}{2N}\right]w(-h).
\end{align*}
Since $A+C$ is positive, we obtain
\[
w'(-h)=w(-h)\frac{2N(A-C)(-h)-(A+C)'(-h)}{(A+C)(-h)}.
\]
By the choice of $N_0$ and since $(A-C)(-h)>0$, we have for all $N>N_0$ that the coefficient of $w(-h)$ is positive, and hence $w'(-h)$ and $w(-h)$ have the same direction, meaning that $|w(-h)|\neq\|w\|_\infty$. Similar arguments can be applied at the other boundary, using $(A-C)(1+h)<0$.

We may thus assume that $s$ is an interior point. The scalar product we are interested in takes the form
\begin{align}
&(\mc{A}_Nw-d_2w)(s)\overline{w(s)}=\frac{(A+C)(s)}{2N}w''(s)\overline{w(s)}+
\left[
\frac{2(A+C)'(s)}{2N}-(A-C)(s)
\right]w'(s)\overline{w(s)}\nonumber\\\label{eqn:generator}+&
\left[
\frac{(A+C)''(s)}{2N}-(A-C)'(s)-d_2
\right]
w(s)\overline{w(s)}.
\end{align}
The condition $|w(s)|=\|w\|_\infty$ means that the function $z\mapsto \Re \left(w(z)\overline{w(s)}\right)$ takes its maximum in $z=s$, and so since $s$ is an interior point of the interval, we have
 $\Re \left(w'(s)\overline{w(s)}\right)=0$ and $\Re \left(w''(s)\overline{w(s)}\right)\leq 0$.
Using these after taking the real parts we obtain
\[
\Re\left((\mc{A}_Nw-d_2w)(s)\overline{w(s)}\right)\leq
\left[
\frac{(A+C)''(s)}{2N}-(A-C)'(s)-d_2
\right]
\|w\|_\infty^2,
\]
which is negative for any $d_2$ exceeding $d:=\|(A+C)''\|_\infty+\|(A-C)'\|_\infty$, a bound independent of $t$ and $N$. This proves the boundedness of the second family of norms.

\underline{Part 2.}
Writing $d_2=\alpha+\|(A+C)''\|_\infty+\|(A-C)'\|_\infty$ with some $\alpha>0$, the above inequality implies that we for any $w\in D(\mc{A}_N)$ have
\begin{equation}\label{eqn:norm_eq}
\|\mc{A}_Nw-d_2w\|_\infty>\alpha\|w\|_\infty,
\end{equation}
whereby $Y_N$ is indeed a Banach space with the desired norm. We showed that $\|e^{-d_2t}S_N(t)\|_{\mc{X}_N}$ is bounded from above by 1. But then for any $w\in Y_N$ we have
\begin{align*}
&\|e^{-d_2t}S_N(t)w\|_{Y_N}=\|e^{-d_2t}(\mc{A}_N-d_2)S_N(t)w\|_\infty=\|e^{-d_2t}S_N(t)(\mc{A}_N-d_2)w\|_\infty\\
=&\|e^{-d_2t}S_N(t)\left[(\mc{A}_N-d_2)w\right]\|_{\mc{X}_N}\leq \|(\mc{A}_N-d_2)w\|_{\mc{X}_N}=\|w\|_{Y_N}.
\end{align*}

To sum up, we may choose $\mf{d}:=\|(A+C)''\|_\infty+\|A'\|_\infty+\|C'\|_\infty$.
\end{proof}

Next, we want to bound the growth of the solutions in terms of the $C^1$ norm. To this end, first we fix $N>N_0$, and wish to find a suitable subspace of $C^1([-h,1+h])$ on which the semigroup is well-defined and strongly continuous also with respect to the new norm. We have seen that the semigroup is analytic on $\mc{X_N}$, and thus for any $v_0\in \mc{X}_N$, the elements $v_t$ of the orbit lie in $D(\mc{A}_N)$ for all $t>0$. In particular, each $v(t)$ satisfies the boundary conditions, hence if the initial condition $v_0$ does not, then continuity in the $C^1$ norm will automatically fail at $t=0$. Also this shows that any space containing $D(\mc{A}_N)$ remains invariant under the semigroup, which therefore may be restricted to any such subspace. Motivated by these observations, let
 \[
 Z_N:=\left\{f\in C^1([-h,1+h])\left|\frac{1}{2N}((A+C)f)'(z)-((A-C)f)(z)=0 \text{ for } z=-h, 1+h\right.\right\}
 \]
 be equipped with the standard $C^1$ norm.\\
The following lemma is rather technical, but its essence is that by applying a similarity transformation to $C([-h,1+h])$, we may eliminate the first order derivative term of the generator, allowing us to use inequality (\ref{eqn:norm_eq}) to compare the norm on $Y_N$ with the $C^2$ norm, and through this with the $C^1$ norm. Then we use the fact that we have a $C_0$-semigroup on $Y_N$, and take its unique extension to $Z_N$ to obtain the desired result.
 
\begin{lem}\label{le:semigroup}
For each $N>N_0$, the restriction of $S_N(\cdot)$ to $Z_N$ is a
$C_0$-semigroup with respect to the norm on $Z_N$.
\end{lem}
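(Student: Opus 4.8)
The plan is to check four things: $Z_N$ is invariant under $S_N$; $S_N$ restricts to a $C_0$-semigroup on the smaller space $Y_N$; $Y_N=D(\mc{A}_N)$ is dense in $Z_N$ for the $C^1$-norm; and --- the technical core --- the operators $S_N(t)$ are bounded on $(Z_N,\|\cdot\|_{C^1})$ uniformly for $t$ in compact intervals. Once these are in place, the extension of $S_N|_{Y_N}$ to all of $Z_N$ and its strong $C^1$-continuity follow from a standard density argument.

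I would dispatch the soft points first. Analyticity of $S_N$ on $\mc{X}_N$ gives $S_N(t)\mc{X}_N\subseteq D(\mc{A}_N)\subseteq Z_N$ for $t>0$, while $S_N(0)=I$, so $Z_N$ is invariant. By Lemma~\ref{le:exp_bound}(2), $S_N$ restricts to a $C_0$-semigroup on $Y_N=(D(\mc{A}_N),\|\cdot\|_{\mc{A}_N-dI})$ with $\|e^{-dt}S_N(t)|_{Y_N}\|\le1$; this also follows from the fact that, $d$ lying in the resolvent set, $\mc{A}_N-dI$ is an isometric isomorphism of $Y_N$ onto $\mc{X}_N$ intertwining $S_N|_{Y_N}$ with the strongly continuous $S_N$ on $\mc{X}_N$. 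For the density of $D(\mc{A}_N)$ in $Z_N$: writing $\mc{B}(f):=\big(B(f)(-h),B(f)(1+h)\big)$ for the boundary form $B$ defining \eqref{densbc1}--\eqref{densbc2}, one notes that $\mc{B}$ is a bounded linear surjection $C^1[-h,1+h]\to\mb{C}^2$ with $Z_N=\ker\mc{B}$ and $D(\mc{A}_N)=\ker\mc{B}\cap C^2$. Given $f\in Z_N$, take smooth $f_k\to f$ in $C^1$; then $\mc{B}(f_k)\to0$, and subtracting a smooth correction $\psi_k$ with $\mc{B}(\psi_k)=-\mc{B}(f_k)$ and $\|\psi_k\|_{C^1}\le c\,|\mc{B}(f_k)|\to0$ gives $f_k+\psi_k\in D(\mc{A}_N)$ converging to $f$ in $C^1$.

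The heart of the proof is the uniform $C^1$-bound, and this is where the similarity transformation is used. For the fixed $N$, let $a:=A+C>0$ and let $m>0$ be a $C^4$ solution on $[-h,1+h]$ of $m'/m=\big(N(A-C)-a'\big)/a$; multiplication by $m$ is an isomorphism of each $C^k$-space ($0\le k\le 4$) and conjugates $\mc{A}_N$ to the operator $\widetilde{\mc{A}}_Ng=\tfrac{a}{2N}g''+\widetilde q\,g$ with vanishing first-order term, $\widetilde q\in C^2$. Combining the identity $\mc{A}_N(mg)=m\widetilde{\mc{A}}_Ng$, inequality \eqref{eqn:norm_eq}, and the one-dimensional interpolation inequality $\|\cdot\|_{C^1}\le\varepsilon\|\cdot\|_{C^2}+C_\varepsilon\|\cdot\|_{C^0}$ on the bounded interval, one obtains that $\|\cdot\|_{Y_N}$ is equivalent to $\|\cdot\|_{C^2}$ on $D(\mc{A}_N)$, and in particular dominates $\|\cdot\|_{C^1}$ there. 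This already yields strong $C^1$-continuity at $t=0$ on the dense subspace $Y_N$, since $\|S_N(t)f-f\|_{C^1}\le c_N\|S_N(t)f-f\|_{Y_N}\to0$ for $f\in Y_N$. What is still missing is a bound of $\|S_N(t)f\|_{C^1}$ by $\|f\|_{C^1}$ --- not merely by $\|f\|_{Y_N}$. For this I would differentiate the equation in the transformed variables: if $h$ solves $\partial_t h=\widetilde{\mc{A}}_Nh$, then $g:=\partial_z h$ solves a second-order parabolic equation of the same structure (positive leading coefficient, bounded lower-order coefficients), forced by the source $\widetilde q'\,h$, which is controlled in $C^0$ by the already-bounded $h$, and subject to the boundary relation obtained by differentiating that of $h$. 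Rerunning the maximum-principle estimate of Lemma~\ref{le:exp_bound} (Part~1) for this problem --- invoking $N>N_0$ once more to pin down the sign of the boundary coefficients --- shows that its principal part generates a quasi-contractive $C_0$-semigroup on $C[-h,1+h]$, and a variation-of-constants argument then gives $\|\partial_z S_N(t)f\|_\infty\le M_N(t)\big(\|f\|_\infty+\|\partial_z f\|_\infty\big)$ for $f$ in the dense class; undoing the similarity transformation yields $\|S_N(t)\|_{Z_N\to Z_N}\le M_N'(t)$, locally bounded in $t$.

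Given invariance, density, the uniform bound, and strong continuity on a dense subspace, $S_N|_{Y_N}$ extends uniquely to $Z_N$ and the extension is a $C_0$-semigroup, which is the assertion of the lemma. The main obstacle is precisely this last uniform $C^1$-estimate: because $C^1$ is not an interpolation space between $C^0$ and $C^2$, the bound cannot be extracted from the $\mc{X}_N$- and $Y_N$-estimates and has to be produced directly, and the delicate point there is keeping track of the boundary condition inherited by $\partial_z h$, which couples $\partial_z h$ and $h$ at the two endpoints.
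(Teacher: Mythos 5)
Your argument follows the same route as the paper's in its core: the similarity transformation eliminating the first-order term (your $m$ is the paper's $g^{-1}$), the resulting dominance of the $C^2$ --- hence $C^1$ --- norm by $\|\cdot\|_{Y_N}$ on $D(\mc{A}_N)$, strong $C^1$-continuity on the dense subspace $Y_N=D(\mc{A}_N)$, invariance of $Z_N$ via analyticity, and extension by density. Where you go further is in making explicit two points the paper leaves implicit. You supply a proof, via bounded surjectivity of the boundary form, of the density of $D(\mc{A}_N)$ in $Z_N$ in the $C^1$ norm, which the paper merely asserts (``the closure of $D(\mc{A}_N)$ with respect to the $C^1$ norm is exactly $Z_N$''). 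More substantively, you correctly observe that the step ``hence it also admits a unique strongly continuous closure with respect to the $C^1$ norm'' presupposes a locally uniform bound on $\|S_N(t)|_{Z_N}\|_{Z_N}$, and that this bound cannot be extracted by interpolating the $\mc{X}_N$- and $Y_N$-estimates, since $C^1$ is not the real or complex interpolation space between $C^0$ and $C^2$. That bound \emph{is} proved in the paper, but only afterwards, in Lemma~\ref{le:exp_bound2}, via a maximum-principle computation directly on $\mc{A}_N w$ and $(\mc{A}_N w)'$ at the points where $|w|$ and $|w'|$ peak, with the endpoint case handled through the boundary condition. Your proposed substitute --- differentiate the transformed PDE, treat $\partial_z h$ as a forced parabolic problem with inherited boundary coupling, and rerun the maximum-principle argument --- is a viable alternative to the same estimate; the delicate endpoint case you flag is precisely what most of Lemma~\ref{le:exp_bound2}'s proof is devoted to. So your proof is correct, and by importing the content of Lemma~\ref{le:exp_bound2} (or your variant of it) into the present lemma, it is a more complete rendering than the paper's own write-up, which would benefit from either reordering or at least a forward reference at the extension step.
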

\begin{proof} Choose $d>\|(A+C)''\|_\infty+\|A'\|_\infty+\|C'\|_\infty$, and write
\[
f_1:=(A+C)/2N,\, f_2=C-A,\, g(z):=e^{\int_{-h}^{z}\frac{2f_1'+f_2}{2f_1}}.
\]
Then $g'=\frac{2f_1'+f_2}{2f_1}g$ and $g''=\left(\frac{2f_1'+f_2}{2f_1}\right)'g+\left(\frac{f_2}{2f_1}\right)^2g$.
Using this, for any $w\in D(\mc{A}_N)$ we have
\begin{align*}
&(\mc{A}_N-dI)w=f_1w''+(2f_1'+f_2)w'+(f_1''+f_2'-d)w\\
&=g^{-1}
\left[
f_1\left(gw''+\frac{2f_1'+f_2}{f_1}gw'+\left(\left(\frac{2f_1'+f_2}{f_1}\right)'+\left(\frac{f_2}{2f_1}\right)^2\right)gw\right)\right.\\
&+\left.\left\{
f_1''+f_2'
-f_1\left(
\left(\frac{2f_1'+f_2}{2f_1}\right)'+\left(\frac{f_2}{2f_1}\right)^2
\right)-d\right\}gw
\right]
.\\
&=g^{-1}
\left[
f_1(gw)''+\left\{
f_1''+f_2'
-f_1\left(
\left(\frac{2f_1'+f_2}{2f_1}\right)'+\left(\frac{f_2}{2f_1}\right)^2
\right)-d\right\}(gw)\right]\\
&=g^{-1}\left(f_1(gw)''+f_3(gw)\right),
\end{align*}
where
\[
f_3=f_1''+f_2'
-f_1\left(
\left(\frac{2f_1'+f_2}{2f_1}\right)'+\left(\frac{f_2}{2f_1}\right)^2\right)-d.
\]

We thus have for any $w\in D(\mc{A}_N)$, using inequality (\ref{eqn:norm_eq}) with the appropriate $\alpha>0$:
\begin{align*}
&\|f_1w''+f_3w\|_\infty\geq \|g^{-1}\left(f_1w''+f_3w\right)\|_\infty\cdot \|g^{-1}\|_\infty^{-1}
=\|g^{-1}\|_\infty^{-1}\cdot \|(\mc{A}_N-dI)w\|_\infty\\
&\geq \|g^{-1}\|_\infty^{-1}\cdot\alpha \|w\|_\infty\geq \left(\|f_3\|_\infty^{-1}\cdot\|g^{-1}\|_\infty^{-1}\cdot\alpha\right)\|f_3w\|_\infty.
\end{align*}
Clearly, if $d$ is large enough (recall that $N$ is fixed), $\|f_3\|_\infty\neq 0$. Let $\beta:=\|f_3\|_\infty^{-1}\cdot\|g^{-1}\|_\infty^{-1}\cdot\alpha$.  We then have $\|f_1w''+f_3w\|_\infty\geq\beta \|f_3w\|_\infty$, which by the properties of the maximum norm implies
\[
\|f_1w''+f_3w\|_\infty\geq\frac{\beta}{\beta+1}\|f_1w''\|_\infty.
\]
This yields
\begin{align*}
\|(\mc{A}_N-dI)w\|_\infty&=\|g^{-1}\left(f_1w''+f_3w\right)\|_\infty\geq \|g\|_\infty^{-1}\cdot
\|f_1w''+f_3w\|_\infty\\
\geq&\|g\|_\infty^{-1}\cdot\frac{\beta}{\beta+1}\|f_1w''\|_\infty\geq \frac{\beta}{\|g\|_\infty\|f_1^{-1}\|_\infty(\beta+1)}\|w''\|_\infty.
\end{align*}
Combined with inequality (\ref{eqn:norm_eq}), we obtain that the norm on $Y_N$ dominates the $C^2$ norm, which in turn dominates the $C^1$ norm.

 We have seen in Lemma \ref{le:exp_bound} that $S_N(\cdot)$ is a strongly continuous semigroup on $Y_N$ with respect to the $Y_N$-norm. But then this also holds true for the weaker norm $C^1$. Note that we know $S_N(\cdot)$ has a unique continuous extension to $\mc{X}_N$ with respect to the maximum norm, hence it also admits a unique strongly continuous closure with respect to the $C^1$ norm. But the closure of $D(\mc{A}_N)$ with respect to the $C^1$ norm is exactly $Z_N$.
\end{proof}
Now consider the subspace
\[
\mk{D}_N:=\left\{f\in D(\mc{A}_N)\left|\mc{A}_Nf\in Z_N\right.\right\}
\]
of $Z_N$. It can easily be seen that the generator of the $C_0$-semigroup $\left.S_N(t)\right|_{Z_N}$ is
\[
\mc{B}_N:=\left.\mc{A}_N\right|_{Z_N}.
\]
What remains to be shown is that we can provide exponential bounds for these restricted semigroups independently of $N>N_0$.
\begin{lem}\label{le:exp_bound2}
For any large enough constant $d>0$ we have for every $N>N_0$ that
\[
\left\|\left.e^{-dt}S_N(t)\right|_{Z_N}\right\|_{Z_N}\leq 1.
\]
\end{lem}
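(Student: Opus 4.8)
The plan is to prove that, for a suitable constant $d$ independent of $N$, the operator $\mc{B}_N-dI$ is dissipative on $Z_N$ endowed with the $C^1$-norm. Since $\mc{B}_N=\mc{A}_N|_{Z_N}$ generates the $C_0$-semigroup $S_N(\cdot)|_{Z_N}$, the operator $\mc{B}_N-dI$ then generates $\bigl(e^{-dt}S_N(t)|_{Z_N}\bigr)_{t\ge0}$, and a $C_0$-semigroup whose generator is dissipative is a contraction semigroup — which is exactly the claim. Dissipativity may be verified on a core, and $\mk{D}_N$ (which lies in $C^3[-h,1+h]$, since $\mc{A}_Nw\in Z_N\subset C^1$ and the leading coefficient $(A+C)/2N$ is bounded away from $0$) will serve. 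So fix $w\in\mk{D}_N$ with $w\not\equiv0$, write $\|w\|_{C^1}=\|w\|_\infty+\|w'\|_\infty$, and choose $s_0$ with $|w(s_0)|=\|w\|_\infty$ and $s_1$ with $|w'(s_1)|=\|w'\|_\infty$; both norms are positive, since $w'\equiv0$ would force $w$ constant, and a nonzero constant cannot lie in $Z_N$ for $N>N_0$. The functional $w^\ast\colon g\mapsto\tfrac{\overline{w(s_0)}}{|w(s_0)|}g(s_0)+\tfrac{\overline{w'(s_1)}}{|w'(s_1)|}g'(s_1)$ is a normalised duality functional of $w$, and it suffices to show $\Re\langle(\mc{B}_N-dI)w,w^\ast\rangle\le0$.

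Write $\mc{A}_Nf=pf''+qf'+rf$ with $p=\tfrac{A+C}{2N}>0$, $q=\tfrac{(A+C)'}{N}-(A-C)$, $r=\tfrac{(A+C)''}{2N}-(A-C)'$, so that $(\mc{A}_Nw-dw)'=p\,(w')''+(p'+q)\,(w')'+(q'+r-d)\,(w')+r'w$. By the argument already made in the proof of Lemma \ref{le:exp_bound}, $s_0$ is an interior point, whence $\Re(\overline{w(s_0)}w'(s_0))=0$, $\Re(\overline{w(s_0)}w''(s_0))\le0$, and, since $p(s_0)>0$,
\[
\Re\Bigl[\tfrac{\overline{w(s_0)}}{|w(s_0)|}(\mc{A}_Nw-dw)(s_0)\Bigr]\le(\|r\|_\infty-d)\,\|w\|_\infty .
\]
If $|w'|$ attains its maximum at some interior point, take $s_1$ there; applying the same computation to $w'$ (here $w\in C^3$ is used) yields $\Re\bigl[\tfrac{\overline{w'(s_1)}}{|w'(s_1)|}(\mc{A}_Nw-dw)'(s_1)\bigr]\le(\|q'+r\|_\infty-d)\|w'\|_\infty+\|r'\|_\infty\|w\|_\infty$, and the two contributions add up to at most $0$ once $d\ge\|r\|_\infty+\|r'\|_\infty$ and $d\ge\|q'+r\|_\infty$. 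For $N>N_0$ the interval $[-h,1+h]$ is contained in $[-\eta,1+\eta]$ and the $\tfrac1{2N}$-weighted terms are innocuous, so these thresholds are controlled by $\|A\|_{C^4[-\eta,1+\eta]}$ and $\|C\|_{C^4[-\eta,1+\eta]}$.

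The main obstacle is the remaining case, in which $|w'|$ is maximal only at an endpoint, say $s_1=-h$; then $(\mc{A}_Nw-dw)'(-h)$ carries a third-derivative term with an $O(N)$ coefficient that is not dominated by $\|w\|_{C^1}$, and the estimate is rescued only by using the two boundary conditions together with the endpoint-maximality of $|w'|$. Because $w\in Z_N$ we have $w'(-h)=\theta_N w(-h)$ with $\theta_N=\tfrac{2N(A-C)(-h)-(A+C)'(-h)}{(A+C)(-h)}$, and $\theta_N>0$ for $N>N_0$ exactly as in Lemma \ref{le:exp_bound}; because $w\in\mk{D}_N$, the function $\mc{A}_Nw$ — and hence $\mc{A}_Nw-dw$ — also lies in $Z_N$ and therefore obeys the same relation, so $(\mc{A}_Nw-dw)'(-h)=\theta_N(\mc{A}_Nw-dw)(-h)$, which trades the $w'''$-term for a $w''$-term. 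Finally, since $|w'|$ has a maximum at the left endpoint, $\Re(\overline{w'(-h)}w''(-h))\le0$, and as $w'(-h)$ is a positive multiple of $w(-h)$ this gives $\Re(\overline{w(-h)}w''(-h))\le0$. Expanding $(\mc{A}_Nw-dw)(-h)=p(-h)w''(-h)+q(-h)w'(-h)+(r(-h)-d)w(-h)$ and using $\Re(\overline{w(-h)}w'(-h))=\theta_N|w(-h)|^2$, $p(-h)>0$, and $\theta_N|w(-h)|=\|w'\|_\infty$, one gets
\[
\Re\Bigl[\tfrac{\overline{w'(-h)}}{|w'(-h)|}(\mc{A}_Nw-dw)'(-h)\Bigr]=\tfrac{\theta_N}{|w(-h)|}\Re\bigl[\overline{w(-h)}(\mc{A}_Nw-dw)(-h)\bigr]\le\bigl(q(-h)\theta_N+r(-h)-d\bigr)\|w'\|_\infty .
\]
The one genuinely computational point is that $q(-h)\theta_N$ is bounded above uniformly in $N>N_0$: writing $x=N(A-C)(-h)$, $a'=(A+C)'(-h)$, $a=(A+C)(-h)$, one has $q(-h)\theta_N=\tfrac1{Na}(a'-x)(2x-a')$, and since the quadratic $\xi\mapsto(a'-\xi)(2\xi-a')$ has maximum $(a')^2/8$, this is at most $(a')^2/(8Na)\le\|(A+C)'\|_\infty^2/\bigl(8N_0\min_{[-\eta,1+\eta]}(A+C)\bigr)$. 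The endpoint $s_1=1+h$ is symmetric, now with $\theta_N<0$ and $(A-C)(1+h)<0$, and the same computation gives the same uniform bound for $q(1+h)\theta_N$. Hence, taking $d$ larger than $\|r\|_\infty+\|r'\|_\infty$, $\|q'+r\|_\infty$, and $\|r\|_\infty+\|(A+C)'\|_\infty^2/\bigl(8N_0\min_{[-\eta,1+\eta]}(A+C)\bigr)$, we obtain $\Re\langle(\mc{B}_N-dI)w,w^\ast\rangle\le0$ for every $w\in\mk{D}_N$ and every $N>N_0$, which proves the dissipativity and hence the lemma.
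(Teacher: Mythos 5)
Your proof is correct and follows the same broad strategy as the paper: show that for some $d$ uniform in $N>N_0$ the rescaled semigroup $e^{-dt}S_N(t)$ is a contraction on $Z_N$ in the $C^1$-norm, by an estimate at the maximizers of $|w|$ and $|w'|$, with a case split according to whether the maximizer of $|w'|$ is interior or at an endpoint. (You phrase this as dissipativity of $\mc{B}_N-dI$ via a duality functional, the paper phrases it as monotone decrease of the rescaled $Z_N$-norm along orbits; these are the same estimate.) That said, your handling of the endpoint case is cleaner than the paper's on two counts.

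First, you derive $(\mc{A}_Nw-dw)'(-h)=\theta_N(\mc{A}_Nw-dw)(-h)$ directly from the observation that $w\in\mk{D}_N$ forces $\mc{A}_Nw\in Z_N$, so $\mc{A}_Nw$ itself obeys the boundary condition at $-h$. The paper obtains the same identity more circuitously, by differentiating the boundary condition of $v_\tau$ in $\tau$ and swapping $\partial_z$ with $\partial_\tau$; both are valid, but yours is more transparent, and it also lets you dispense with the paper's separate sub-case distinguishing $\Re\bigl(w''(-h)\overline{w'(-h)}\bigr)=0$ from $<0$.

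Second, and more substantively, your quadratic bound on $\theta_N q(-h)$ supplies a step that the paper's displayed inequality appears to skip. After applying the boundary relation, the coefficient of $\|w'\|_\infty$ in the estimate is $\gamma\,q(-h)+r(-h)-d$ (with $\gamma=\theta_N$ in the paper's notation), but the paper's final line replaces $\gamma\,q(-h)$ by $q(-h)$, which amounts to assuming $(\gamma-1)q(-h)\le 0$. Since $\gamma$ grows like $N$, this requires $q(-h)\le 0$; yet Assumptions \ref{ass:AC} only give $2N(A-C)(-h)>|(A+C)'(-h)|$, which forces $q(-h)=\tfrac{(A+C)'(-h)}{N}-(A-C)(-h)<(A-C)(-h)$ but not $q(-h)\le 0$ when $N$ is close to $N_0$ and $(A+C)'(-h)>0$. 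Your observation that $\theta_N q(-h)=\tfrac{(a'-x)(2x-a')}{Na}$ with $x=N(A-C)(-h)$, together with the elementary bound $\sup_\xi(a'-\xi)(2\xi-a')=(a')^2/8$, gives the needed uniform-in-$N$ upper bound on this term and closes that gap. The rest of your argument (interior estimate via the sign conditions at the maximizers, the trivial observation that $\mk{D}_N\subset C^3$ by elliptic regularity so that the interior estimate for $|w'|$ makes sense, and the final choice of $d$) matches the paper.
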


\begin{proof}
By classical PDE theory, since the functions $A$ and $C$ are both three times continuously differentiable, and the PDE is non-degenerate, the solutions $v_t:=S_N(t)v_0$ are all in $C^3([-h,1+h])$ for all $v_0\in \mc{X}_N$ and $t>0$. 
Also, we have for any $t>0$ that
\[
\mc{A}_N^2 v_t=\mc{A}_N S_N(t/2) v_{t/2}=\mc{A}_N S_N(t/2) \mc{A}_N v_{t/2},
\]
showing that $v_t\in D(\mc{A}_N^2)$ for all $t>0$.
However, we have 
\[
\mk{D}_N\supset C^3([-h,1+h])\cap D(\mc{A}_N^2),
\]
whence $v_t\in \mk{D}_N$ for each $v_0\in Z_n$ and $t>0$. From here, the arguments are very similar to those used in the proof of Lemma \ref{le:exp_bound}. We wish to show that for large enough $d>0$, the $Z_N$ norm along an orbit rescaled with $e^{-dt}$ is monotone decreasing for $t>0$.
To this end, let $v_0\in Z_N$, fix a $t>0$, and write $w:=S_N(t)v_0$. By the above, we have $w\in\mk{D}_N$. If $w=0$, the orbit stays constant $0$ for any larger time, rendering this case trivial. Assume that $w\neq 0$, and that $q,s\in[-h,1+h]$ are such that $\|w'\|=|w'(q)|$ and $\|w\|=|w(s)|$, and so
\[
\|w\|_{Z_N}=\|w'\|_\infty+\|w\|_\infty=|w'(q)|+|w(s)|.
\]
As seen in the proof of Lemma \ref{le:exp_bound}, $s$ is an interior point of the interval, however, $q$ need not be. It is enough to show that
\begin{align*}
F:=&\Re\left((\mc{B}_Nw-dw)'(q)\overline{\frac{w'(q)}{|w'(q)|}}\right)+
\Re\left((\mc{B}_Nw-dw)(s)\overline{\frac{w(s)}{|w(s)|}}\right)\\
=
&\Re\left((\mc{A}_Nw-dw)'(q)\overline{\frac{w'(q)}{|w'(q)|}}\right)+
\Re\left((\mc{A}_Nw-dw)(s)\overline{\frac{w(s)}{|w(s)|}}\right)
\end{align*}
is negative for any large enough $d$ independent of $w\in\mk{D}_N$, $t>0$ and $N>N_0$.
We have that
\begin{align*}
&(\mc{A}_Nw-dw)'(q)=
\frac{(A+C)(q)}{2N}w'''(q)+
\left[
\frac{3(A+C)'(q)}{2N}-(A-C)(q)
\right]w''(q)\\+&
\left[
\frac{3(A+C)''(q)}{2N}-2(A-C)'(q)-d
\right]
w'(q)
+\left[
\frac{(A+C)'''(q)}{2N}-(A-C)''(q)
\right]
w(q).
\end{align*}
 First let us consider the case when $q$ is an interior point of $[-h,1+h]$. Then by the condition on $q$ we obtain
 $\Re \left(w''(q)\overline{w'(q)}\right)=0$ and $\Re \left(w'''(q)\overline{w'(q)}\right)\leq 0$. Using that $A+C>0$ and combining with the expressions for $s$, we have
\begin{align*}
&F=\Re\left((\mc{A}_Nw-dw)'(q)\overline{\frac{w'(q)}{|w'(q)|}}\right)+
\Re\left((\mc{A}_Nw-dw)(s)\overline{\frac{w(s)}{|w(s)|}}\right)\\&\leq
\left[
\frac{3(A+C)''(q)}{2N}-2(A-C)'(q)-d
\right]
\|w'\|_\infty
+\left[
\frac{(A+C)'''(q)}{2N}-(A-C)''(q)
\right]
w(q)\overline{\frac{w'(q)}{|w'(q)|}}
\\&+
\left[
\frac{(A+C)''(s)}{2N}-(A-C)'(s)-d
\right]
\|w\|_\infty\leq
\left[
\frac{3(A+C)''(q)}{2N}-2(A-C)'(q)-d
\right]
\|w'\|_\infty\\&+
\left[
\frac{(A+C)''(s)}{2N}-(A-C)'(s)+\left|
\frac{(A+C)'''(q)}{2N}-(A-C)''(q)
\right|-d
\right]
\|w\|_\infty,
\end{align*}
which is indeed negative for any large enough $d$ not depending on $t>0,N>N_0,w\in \mk{D}_N$, provided $w\neq0$.

Now we have to deal with the case when $q$ is allowed to be one of the endpoints of the interval. By symmetry, it is enough to consider $q=-h$, in which case
\begin{equation}\label{eqn:deriv_ineq}
\Re \left(w''(-h)\overline{w'(-h)}\right)\leq 0.
\end{equation}
Note that if this holds with equality, then we must have $\Re \left(w'''(-h)\overline{w'(-h)}\right)\leq 0$, and the above arguments work. Therefore we may assume $\Re \left(w''(-h)\overline{w'(-h)}\right)< 0$.
Recall that $S_N(\tau)\in D(\mc{A}_N)$ for all $\tau>0$, hence the functions $v_\tau$ all satisfy the boundary condition at $-h$, and so the functions $e^{-d\tau}v_\tau'(-h)$ and $e^{-d\tau}v_\tau(-h)$ only differ by a positive constant factor $\gamma:=\frac{2N(A-C)(-h)-(A+C)'(-h)}{(A+C)(-h)}$. We thus obtain
\begin{align*}
&\Re\left((\mc{A}_Nw-dw)'(-h)\overline{\frac{w'(-h)}{|w'(-h)|}}\right)=
e^{dt}\Re\left(\partial_z\left.\left(\partial_\tau(e^{-d\tau}v_\tau(z))|_{\tau=t}\right)\right|_{z=-h}\cdot\overline{\frac{w'(-h)}{|w'(-h)|}}\right)\\
=&e^{dt}\Re\left(\partial_\tau\left.\left(\partial_z(e^{-d\tau}v_\tau(z))|_{z=-h}\right)\right|_{\tau=t}\cdot\overline{\frac{w'(-h)}{|w'(-h)|}}\right)\\
=&e^{dt}\Re\left(\partial_\tau\left.\left(\partial_z(e^{-d\tau}v_\tau(z))|_{z=-h}\right)\right|_{\tau=t}\cdot\overline{\frac{w'(-h)}{|w'(-h)|}}\right)=
e^{dt}\Re\left(\partial_\tau\left.\left(\gamma e^{-d\tau}v_\tau(-h)\right)\right|_{\tau=t}\cdot\overline{\frac{w(-h)}{|w(-h)|}}\right)\\
=&\gamma\Re\left((\mc{A}_Nw-dw)(-h)\overline{\frac{w(-h)}{|w(-h)|}}\right).
\end{align*}
Also, we have
\[
\gamma\Re \left(w''(-h)\overline{w(-h)}\right)=\Re \left(w''(-h)\overline{w'(-h)}\right)\leq 0,
\]
and combining this with equation (\ref{eqn:generator}) yields
\begin{align*}
&\Re\left((\mc{A}_Nw-dw)'(-h)\overline{\frac{w'(-h)}{|w'(-h)|}}\right)=
\gamma\frac{(A+C)(-h)}{2N}\Re\left(w''(-h)\overline{\frac{w(-h)}{|w(-h)|}}\right)\\+&
\gamma\left[
\frac{2(A+C)'(-h)}{2N}-(A-C)(-h)
\right]\gamma|w(-h)|\\
+&\gamma
\left[
\frac{(A+C)''(-h)}{2N}-(A-C)'(-h)-d
\right]
|w(-h)|\\
\leq&
|w'(-h)|\left[
\frac{2(A+C)'(-h)}{2N}-(A-C)(-h)+\frac{(A+C)''(-h)}{2N}-(A-C)'(-h)-d
\right]
,
\end{align*}
which is negative for any large enough $d>0$, independently of $t>0$, $N>N_0$ and $0\neq w\in\mk{D}_N$. The term with $s$ also has this property by the proof of Lemma \ref{le:exp_bound}, completing our argument.
\end{proof}

\noindent Next, we shall turn our attention to the error bound between the generators, and fix $N>N_0$. The second degree Taylor expansion with Lagrange remainder term will be used to estimate the left hand side of \eqref{eqn:Gen}:
$$
\max \{ (A_N P_N f - P_N \mc{A}_N f )_k\ : k=0,1, \ldots , N \} .
$$
First however let us define two functions $F_1,F_2: [-2h,1+2h]\to\mb{R}$ in the following way. We let $F_1(z)=((A+C)f)(z)$ and $F_2=((A-C)f)(z)$ for $-h\leq z\leq 1+h$, and then extend both functions to the full interval so as to keep them in $C^2([-2h,1+2h])$, whilst having $\|F_1\|_{C^2}\leq 4\|(A+C)f\|_{C^2}$ and $\|F_2\|_{C^2}\leq 4\|(A-C)f\|_{C^2}$.

For $1\leq k\leq N-1$, using the tridiagonal form of the matrix $A_N$, the first term can be written as
$$
(A_N P_N f)_k = a_{k-1}f\left(\frac{k-1}{N}\right)-(a_{k}+c_{k})f\left(\frac{k}{N}\right)+c_{k+1}f\left(\frac{k+1}{N}\right) ,
$$
and exploiting the density dependence \eqref{dendep}, leads to
$$
(A_N P_N f)_k = N\left( (Af)\left(\frac{k-1}{N}\right)-((A+C)f)\left(\frac{k}{N}\right)+(Cf)\left(\frac{k+1}{N}\right)   \right) .
$$
This can be artificially rearranged to
\begin{align*}
(A_N P_N f)_k &= \frac{N}{2} \left( F_1\left(\frac{k-1}{N}\right)-2F_1\left(\frac{k}{N}\right)+F_1\left(\frac{k+1}{N}\right) \right) \\
&+ \frac{N}{2} \left(F_2\left(\frac{k-1}{N}\right) - F_2\left(\frac{k+1}{N}\right) \right) .
\end{align*}
The second degree Taylor formula with Lagrangian remainder will be used in the form
$$
F(z+\eta)= F(z) + F'(z)\eta + F''(z+\zeta)\frac{\eta^2}{2} ,
$$
where $\zeta$ is between $0$ and $\eta$. This will be applied with the choices $z=k/N$, $\eta= 1/N$, $\eta= -1/N$, $F=F_1$ and $F=F_2$ leading to
\begin{align*}
(A_N P_N f)_k &= \frac{N}{2} \left( \frac{1}{2N^2}F_1''\left(\frac{k}{N} -\zeta_1\right) + \frac{1}{2N^2}F_1''\left(\frac{k}{N} +\zeta_2\right) \right)\\ &-\frac{N}{2} \left(\frac{2}{N}F_2'\left(\frac{k}{N}\right) - \frac{1}{2N^2}F_2''\left(\frac{k}{N} -\zeta_3\right) + \frac{1}{2N^2}F_2''\left(\frac{k}{N} +\zeta_4\right)  \right)
\end{align*}
with $\zeta_i$ between zero and $1/N$.

Now using \eqref{opAN} we have
$$
(P_N \mc{A}_N f )_k= \frac{1}{2N}F_1''\left(\frac{k}{N}\right)-F_2'\left(\frac{k}{N}\right) ,
$$
hence the difference of the two generators can be expressed as
\begin{align*}
(A_N P_N f)_k - (P_N \mc{A}_N f )_k &= \frac{1}{4N} \left( F_1''\left(\frac{k}{N} -\zeta_1\right) - F_1''\left(\frac{k}{N}\right) \right)\\
&+ \frac{1}{4N} \left( F_1''\left(\frac{k}{N} +\zeta_2\right) - F_1''\left(\frac{k}{N}\right) \right)\\
&+ \frac{1}{4N} \left( F_2''\left(\frac{k}{N} -\zeta_3\right) - F_2''\left(\frac{k}{N} +\zeta_4\right)   \right) .
\end{align*}
The difference can thus be estimated as follows:
$$
|(A_N P_N f)_k - (P_N \mc{A}_N f )_k| \leq \frac{1}{2N} (2\|F_1\|_{C^2}+ \|F_2\|_{C^2} )
\leq\frac{2}{N} (2\|(A+C)f\|_{C^2}+ \|(A-C)f\|_{C^2} ) .
$$

Of the cases $k=0, k=N$, we shall only detail the former, the latter follows in a similar manner.
We have on the one hand
\begin{align*}
(A_N P_N f)_0 &= -a_{0}f\left(\frac{0}{N}\right)+c_{1}f\left(\frac{1}{N}\right)=N\left(-(Af)\left(0\right)+(Cf)\left(\frac{1}{N}\right)   \right)\\
&=N\left( (Af)\left(\frac{-1}{N}\right)-((A+C)f)\left(0\right)+(Cf)\left(\frac{1}{N}\right)   \right)\\
&+\frac{N}{2}\left(
 ((A+C)f)\left(\frac{-1}{N}\right)+((A-C)f)\left(\frac{-1}{N}\right)
\right)
\\
&=\frac{N}{2} \left( F_1\left(\frac{-1}{N}\right)-2F_1\left(\frac{0}{N}\right)+F_1\left(\frac{1}{N}\right) \right) \\
&+ \frac{N}{2} \left(F_2\left(\frac{-1}{N}\right) - F_2\left(\frac{1}{N}\right) \right)+
\frac{N}{2}\left(
 F_1\left(\frac{-1}{N}\right)+F_2\left(\frac{-1}{N}\right)
 \right),
\end{align*}
where the first two terms are identical to those obtained for $1\leq k\leq N-1$, hence we only need to estimate the last term.
On the other hand, since we assume $f\in Y_N$, we can exploit the boundary condition at $-h$, that is:
\[
F_1'(-h)-2N F_2(-h)=0.
\]

\noindent Using the second degree Taylor formula with Lagrangian remainder as above we obtain
\begin{align*}
F_1(0)-F_1(-2h)-F_2(0)-F_2(-2h)&=2hF_1'(-h)+\frac{h^2}{2}\left(F_1''(-h+\xi_1)-F_1''(-h-\xi_2)\right)\\
&-2F_2(-h)-\frac{h^2}{2}\left(F_2''(-h+\xi_3)+F_2''(-h-\xi_4)\right),
\end{align*}
where $\xi_j\in[0,h]$, which after rearranging and taking into account the boundary condition, $2Nh=1$ and $C(0)=0$ yields
\begin{align*}
& F_1\left(\frac{-1}{N}\right)+F_2\left(\frac{-1}{N}\right)=F_1(0)-F_2(0)+\frac{1}{N}(F_1'(-h)-2NF_2(-h))\\
&+\frac{h^2}{2}(F_1''(-h+\xi_1)-F_1''(-h-\xi_2)-F_2''(-h+\xi_3)-F_2''(-h-\xi_4))\\
&= \frac{h^2}{2}(F_1''(-h+\xi_1)-F_1''(-h-\xi_2)-F_2''(-h+\xi_3)-F_2''(-h-\xi_4)).
\end{align*}
That is, compared to the cases $1\leq k\leq N-1$ we have an extra error term for $k=0$ and $k=N$ that can be bounded by
\[
\frac{Nh^2}{2}\left(\|F_1\|_{C^2}+ \|F_2\|_{C^2}\right)=\frac{1}{8N}\left(\|F_1\|_{C^2}+ \|F_2\|_{C^2}\right)
\leq \frac{1}{2N} (\|(A+C)f\|_{C^2}+ \|(A-C)f\|_{C^2} )
\]
Thus we proved the following statement.

\begin{lem}\label{le:num_bound}
There exists a constant $C_0$ such that for any $N\geq N_0$ and $g\in D(\mc{A}_N)$ we have
\[
\max_{0\leq k\leq N}|(A_NP_ng)_k -(P_N\mathcal{A}_Ng)_k|\leq \frac{C_0}{N}(\|(A+C) g\|_{C^2}+\|(A-C) g\|_{C^2})
\]
\end{lem}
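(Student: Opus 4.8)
The strategy is to make the left-hand side of \eqref{eqn:Gen} completely explicit using the tridiagonal structure of $A_N$ together with the density dependence \eqref{dendep}, and then to match it against the formula \eqref{opAN} for $\mathcal{A}_N$ by means of the second-order Taylor expansion with Lagrange remainder. The first move is to introduce the auxiliary functions $F_1:=(A+C)g$ and $F_2:=(A-C)g$, extended from $[-h,1+h]$ to $[-2h,1+2h]$ so that they stay $C^2$ while $\|F_i\|_{C^2}$ remains controlled, up to a universal factor, by $\|(A+C)g\|_{C^2}$ and $\|(A-C)g\|_{C^2}$ respectively; the extension is needed because for $k=0$ and $k=N$ the discrete scheme reaches the nodes $-1/N$ and $1+1/N$, which lie just outside $[-h,1+h]$. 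With these functions at hand, $(A_NP_Ng)_k$ can be rewritten, for $1\le k\le N-1$, as $\tfrac{N}{2}$ times a symmetric second difference of $F_1$ at $k/N$ plus $\tfrac{N}{2}$ times an antisymmetric first difference of $F_2$ — exactly the discretization that $\mathcal{A}_Ng=\tfrac{1}{2N}F_1''-F_2'$ is meant to approximate.

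For the bulk indices $1\le k\le N-1$ I would Taylor-expand $F_1$ and $F_2$ to second order around $k/N$ with steps $\pm 1/N$. The zeroth- and first-order contributions reproduce $(P_N\mathcal{A}_Ng)_k$ exactly, so that the difference $(A_NP_Ng)_k-(P_N\mathcal{A}_Ng)_k$ collapses to a combination, weighted by $1/(4N)$, of differences of the second derivatives $F_1''$ and $F_2''$ evaluated at points within $1/N$ of $k/N$. Bounding each such term crudely by $\tfrac{1}{2N}\|F_i\|_{C^2}$ and invoking the norm control of the extensions yields the desired $O(1/N)$ bound, with a universal constant, on all these indices.

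The boundary indices $k=0$ and $k=N$ (only $k=0$ need be treated, the other being symmetric) are where the actual work lies. There the tridiagonal row is missing its left entry, so after the same rearrangement one obtains, besides the two ``bulk-type'' differences, an extra term of the form $\tfrac{N}{2}\big(F_1(-1/N)+F_2(-1/N)\big)$, which a priori is only of order $N$. The point is that $g\in D(\mathcal{A}_N)$, hence $g$ satisfies the boundary condition, which in this notation reads $F_1'(-h)=2NF_2(-h)$; moreover $F_1(0)-F_2(0)=2(Cg)(0)=0$ because $C(0)=0$ by Assumptions \ref{ass:AC}. Taylor-expanding $F_1$ and $F_2$ about $-h$ with step $h$ and substituting these two identities (and $2Nh=1$) cancels the order-$N$ and order-$1$ parts of the extra term, leaving only an $O(h^2)$ remainder built from second derivatives of $F_1,F_2$ near $-h$; multiplying by $\tfrac{N}{2}$ gives an extra contribution of size $O(1/N)$, again controlled by $\|F_1\|_{C^2}+\|F_2\|_{C^2}$. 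Combining the bulk and boundary estimates and absorbing all universal factors into a single constant $C_0$ gives the claim.

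The main obstacle, and essentially the only non-routine step, is this boundary bookkeeping: one must check that the order-$N$ term produced at $k=0,N$ is annihilated \emph{exactly} by the domain constraint — that is, by the Robin-type boundary condition \eqref{densbc1}--\eqref{densbc2} encoded in $D(\mathcal{A}_N)$ — together with $A(1)=C(0)=0$, so that the surviving contribution is genuinely second order in $h$. Everything else amounts to carefully tracking the Lagrange remainders and their domains of validity, which is precisely why the $C^2$-controlled extensions $F_1,F_2$ to $[-2h,1+2h]$ were introduced at the outset.
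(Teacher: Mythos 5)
Your proposal is essentially identical to the paper's own proof: the same auxiliary functions $F_1=(A+C)g$, $F_2=(A-C)g$ extended to $[-2h,1+2h]$, the same rewriting of $(A_NP_Ng)_k$ as a symmetric second difference of $F_1$ plus an antisymmetric first difference of $F_2$, the same second-order Taylor expansion with Lagrange remainder for bulk indices, and the same cancellation of the $O(N)$ boundary term via the Robin condition $F_1'(-h)=2NF_2(-h)$ together with $C(0)=0$. No substantive differences.
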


\begin{rem} In light of the above estimates, the choice of the coefficient of the second order term in $\mc{A}_N$ may seem arbitrary, as similar estimates hold with any other function as long as it is kept of magnitude $O(1/N)$. However, if we were to use the $C^3$ norm, then this is the choice that yields an estimate of order $1/N^2$ for interior points. The reason for not using this estimate here is that the discretization of the boundary condition does not lead to an estimate better than $O(1/N)$ anyway. For possible improvements we refer to Section \ref{sect:outlook}.
\end{rem}

To prove Proposition \ref{prop:appr_first_gen}, we need that with $g=S_N(t)f$, the upper bound on the right hand side can be dominated by $C_1\|g\|_{Y_N}$ for all $0\leq t\leq \tau$ and $N>N_0$.
This shall be done using Lemmas \ref{le:exp_bound} and \ref{le:exp_bound2}.

\begin{lem}\label{le:norm_op}
For any $\tau\geq 0$ there exists a constant $C_1$ such that for any $N\geq N_0$ we have
\[
\|(A+C) u^N(t,\cdot)\|_{C^2}+\|(A-C) u^N(t,\cdot)\|_{C^2}\leq C_1 N\|u_0^N\|_{\mc{A}_N-dI}
\]
for all $0\leq t\leq \tau$.
\end{lem}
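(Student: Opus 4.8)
The plan is to deduce the bound from two growth estimates already in hand and one elementary identity that follows directly from \eqref{opAN}: for every twice continuously differentiable $f$ on $[-h,1+h]$,
\[
\big((A+C)f\big)''=2N\big(\mc{A}_Nf+\big((A-C)f\big)'\big).
\]
Hence the only potentially $N$-dependent contribution to $\|(A+C)u^N(t,\cdot)\|_{C^2}$ is its second derivative, and this is controlled as soon as $\mc{A}_Nu^N(t,\cdot)$ and $\big((A-C)u^N(t,\cdot)\big)'$ are bounded uniformly in $N>N_0$ and $t\in[0,\tau]$. I write $w:=u^N(t,\cdot)$ throughout.

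First I would record the uniform bounds on the low-order quantities. Since $u_0$ and $u_0'$ vanish at $0$ and $1$, the zero extension $u_0^N$ satisfies the homogeneous boundary conditions trivially, so $u_0^N\in D(\mc{A}_N)$. The analytic semigroup $S_N$ leaves $D(\mc{A}_N)$ invariant and commutes with $\mc{A}_N$ there, so $(\mc{A}_N-dI)w=S_N(t)(\mc{A}_N-dI)u_0^N$ and Lemma \ref{le:exp_bound} yields $\|(\mc{A}_N-dI)w\|_\infty\le e^{d\tau}\|u_0^N\|_{\mc{A}_N-dI}$ for $0\le t\le\tau$. Together with inequality \eqref{eqn:norm_eq} for this $d$ this gives $\|w\|_\infty\le\alpha^{-1}\|(\mc{A}_N-dI)w\|_\infty$, and then $\|\mc{A}_Nw\|_\infty\le\|(\mc{A}_N-dI)w\|_\infty+d\|w\|_\infty$, so both $\|w\|_\infty$ and $\|\mc{A}_Nw\|_\infty$ are $O\big(\|u_0^N\|_{\mc{A}_N-dI}\big)$. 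For the first spatial derivative I would use Lemma \ref{le:exp_bound2}, whose $C^1$-contraction on $Z_N$ (together with $w=S_N(t)u_0^N\in Z_N$) gives $\|w\|_{C^1}\le e^{d\tau}\|u_0^N\|_{C^1}$. Because $u_0^N$ is the same fixed profile for every $N$ (the case $u_0\equiv0$ being trivial), $\|u_0^N\|_{C^1}=\|u_0\|_{C^1}$ while $\|u_0^N\|_{\mc{A}_N-dI}\ge\alpha\|u_0^N\|_\infty=\alpha\|u_0\|_\infty>0$ uniformly in $N>N_0$ by \eqref{eqn:norm_eq}; hence $\|w'\|_\infty=O\big(\|u_0^N\|_{\mc{A}_N-dI}\big)$ as well, with all constants depending only on $\tau$, $A$, $C$, $u_0$.

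Next I would feed these into the identity. From $\big((A-C)w\big)'=(A-C)'w+(A-C)w'$ the previous step gives $\|((A-C)w)'\|_\infty=O\big(\|u_0^N\|_{\mc{A}_N-dI}\big)$, so $\|((A+C)w)''\|_\infty\le 2N\big(\|\mc{A}_Nw\|_\infty+\|((A-C)w)'\|_\infty\big)=O\big(N\|u_0^N\|_{\mc{A}_N-dI}\big)$. Since $A+C$ is bounded away from $0$ on $[-h,1+h]\subset[-\eta,1+\eta]$ by Assumptions \ref{ass:AC}, solving $w''=(A+C)^{-1}\big(((A+C)w)''-(A+C)''w-2(A+C)'w'\big)$ propagates this to $\|w''\|_\infty=O\big(N\|u_0^N\|_{\mc{A}_N-dI}\big)$. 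Finally, for $E\in\{A+C,\,A-C\}$ (both in $C^2$) the product rule gives $\|Ew\|_{C^2}\le c\,\|E\|_{C^2}\big(\|w\|_\infty+\|w'\|_\infty+\|w''\|_\infty\big)=O\big(N\|u_0^N\|_{\mc{A}_N-dI}\big)$; summing the two cases and absorbing the ($N$- and $t$-independent) constants into one $C_1$ yields the claim for all $0\le t\le\tau$, $N>N_0$.

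The one place that needs care, and the reason this route cannot improve on a single power of $N$, is precisely the factor $2N$ in the identity. If one bounded $\|w'\|_\infty$ only via the integrated first-order ODE obtained from \eqref{opAN} for $(A+C)w$, one would get $\|w'\|_\infty=O\big(N\|u_0^N\|_{\mc{A}_N-dI}\big)$ and hence $\|((A+C)w)''\|_\infty=O\big(N^2\|u_0^N\|_{\mc{A}_N-dI}\big)$; it is exactly the $N$-uniform $C^1$-contraction of Lemma \ref{le:exp_bound2} that keeps $\|w'\|_\infty$ bounded and lets the estimate stop at $O(N)$. Everything else is routine product-rule bookkeeping, together with the observation that, for the fixed initial profile $u_0^N$, all the lower-order norms are comparable to $\|u_0^N\|_{\mc{A}_N-dI}$ uniformly in $N$.
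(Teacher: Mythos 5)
Your proof is correct and essentially mirrors the paper's argument: both isolate the identity $((A+C)u)''=2N\bigl(\mc{A}_N u+((A-C)u)'\bigr)$ from \eqref{opAN}, bound the $Y_N$-norm of $u^N(t,\cdot)$ via Lemma \ref{le:exp_bound}, bound its $C^1$-norm via Lemma \ref{le:exp_bound2}, and finish with product-rule bookkeeping. You are slightly more explicit than the paper at the last step, where the $N$-uniform comparability of $\|u_0^N\|_{C^1}$ and $\|u_0^N\|_{\mc{A}_N-dI}$ is needed to express the final bound purely in terms of $\|u_0^N\|_{\mc{A}_N-dI}$; the paper defers this to the ``Notice that'' remark immediately following the proof.
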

\begin{proof}
We have the following series of inequalities.
\begin{align*}
&\|(A+C) u^N(t,\cdot)\|_{C^2}+\|(A-C) u^N(t,\cdot)\|_{C^2}\leq \left(1+\left|\frac{A-C}{A+C}\right\|_{C^2}\right)\|(A+C) u^N(t,\cdot)\|_{C^2},
\end{align*}
and
\begin{align*}
&\|(A+C) u^N(t,\cdot)\|_{C^2}\leq\|(A+C) u^N(t,\cdot)\|_{C^1}+\|\left((A+C) u^N(t,\cdot)\right)''\|_{\infty}\\
\leq&
\|(A+C) u^N(t,\cdot)\|_{C^1}+\|\left((A+C) u^N(t,\cdot)\right)''+2N\left((A-C)u^N(t,\cdot)\right)'-2dNu^N(t,\cdot)\|_{\infty}\\
&+2N\|\left((A-C)u^N(t,\cdot)\right)'\|_{\infty}+2dN\|u^N(t,\cdot)\|_\infty\\
\leq& N\|u^N(t,\cdot)\|_{Y_N}+\|u^N(t,\cdot)\|_{Z^N}
\left(
\|A+C\|_{C^1}+2N\|A-C\|_{C^1}+2dN
\right)\\
\leq&
e^{d\tau}N\|u_0^N\|_{Y_N}+e^{d\tau}\|u_0^N\|_{C^1}
\left(
\|A+C\|_{C^1}+2N\|A-C\|_{C^1}+2dN
\right)
\end{align*}
\end{proof}
\noindent Notice that
\begin{align*}
\|u_0^N\|_{Y_N}=\|(\mc{A}_N-dI)u_0^N\|&\leq \|\partial_{zz}((A+C)u_0^N)\|+\|\partial_{z}((A-C)u_0^N)\|
+d\|u_0^N\|\\
&=\|\partial_{zz}((A+C)u_0)\|+\|\partial_{z}((A-C)u_0)\|
+d\|u_0\|= const.,
\end{align*}
whereas
\begin{align*}
\|u^N(t,\cdot)\|_{Y_N}&=\|(\mc{A}_N-dI)u^N(t,\cdot)\|\geq\frac{1}{1+2h}\int_{-h}^{1+h}du^N(t,z)+\left(\mc{A}_Nu^N(t,\cdot)\right)(z) \,\mr{dz}\\
=&\frac{1}{1+2h}\left(
d\int_{-h}^{1+h}u^N(t,z)\,\mr{dz}
+\partial_t
\int_{-h}^{1+h}u^N(t,z)\,\mr{dz}
\right)=\frac{d}{1+2h}\int_{-h}^{1+h}u_0^N(z)\,\mr{dz}\\
\geq& const.,
\end{align*}
since the boundary conditions guarantee that the integral of the solution remains constant.
Putting these observations together, we have proved that \eqref{eqn:Gen} holds. Hence all the conditions of Proposition \ref{prop:appr_first_gen} are fulfilled with $p=0$, and that finishes the proof of Theorem~\ref{thmFP}.

Finally, further exploiting the analyticity of the semigroups involved, this result can be extended to a larger space of initial conditions $f$, weakening the regularity assumptions needed for such an approximation to hold.
Suppose $\mc{A}$ generates an analytic semigroup $S(t)$ with $\|S(t)\|\leq Me^{\omega t}$.
Let $\varepsilon\in (0,1)$, $q> \omega$, and define 
$Y:=D((qI-\mc{A}))$ and $Z:= D((qI-\mc{A})^{\varepsilon})$ with the corresponding operator induced norms.
It is known that we then have (cf. \cite[Corollary 9.22.c]{Batkai-Csomos-Farkas-Ostermann})
\begin{equation*}
\|S(s)\|_{\mathcal{L}(Z,Y)}\leq \frac{e^{qs}M}{s^{1-\varepsilon}}.
\end{equation*}

Thus we obtain the following result (with $p=0$ and $q=d_1>d$ in our case).

\begin{lem}\label{lem:anal_approx}
Suppose that the conditions of Proposition \ref{prop:appr_first_gen} are satisfied and that $\mc{A}_N$ generates an analytic semigroup $S_N(t)$. For the above choice of $Y$ and $Z$ we have
\begin{equation*}
\|T_N(t)P_N f - P_NS_N(t)f\|_{X_N}\leq C''\frac{\|f\|_Z}{N^p}
\end{equation*}
for all
$f\in Z$.
\end{lem}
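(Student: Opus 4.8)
The plan is to combine the time-integrated error representation already used in the proof of Proposition~\ref{prop:appr_first_gen} with the smoothing estimate for analytic semigroups quoted just above the statement. Recall that for $f\in Y_N$ we derived the identity
\[
P_NS_N(t)f-T_N(t)P_nf=\int_0^t T_N(t-s)(P_N\mc{A}_N-A_NP_N)S_N(s)f\,\mathrm{d}s.
\]
For general $f\in Z$ this identity continues to hold by a density argument (here $Z=D((d_1I-\mc{A}_N)^\varepsilon)$ with $d_1>d$, and $Y_N$ is dense in $Z$ in the $Z$-norm, while both sides are continuous in $f$ with respect to the $Z$-norm: the left side because $T_N,S_N,P_N$ are bounded, the right side because the integrand is controlled via the estimate below). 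The point is that for $s>0$ the analytic semigroup $S_N(s)$ maps $Z$ into $Y_N=D(d_1I-\mc{A}_N)$ with the quantitative bound $\|S_N(s)\|_{\mathcal L(Z,Y_N)}\leq e^{d_1 s}M/s^{1-\varepsilon}$, so $S_N(s)f\in Y_N\subset D(\mc{A}_N)$ and Lemma~\ref{le:num_bound} together with Lemma~\ref{le:norm_op}-type reasoning applies to the generator difference.

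The key steps, in order, are as follows. First, fix $\tau\geq0$ and $f\in Z$. For $0<s\leq t\leq\tau$, write $g=S_N(s)f\in Y_N$ and apply the generator estimate \eqref{eqn:Gen} with $p=0$, which gives $\|(A_NP_N-P_N\mc{A}_N)S_N(s)f\|_{X_N}\leq C\|S_N(s)f\|_{Y_N}$; here one uses that the $Y_N$-norm is equivalent (up to the $N$-dependent constants tracked in Lemma~\ref{le:norm_op}) to the quantity $\|(A+C)g\|_{C^2}+\|(A-C)g\|_{C^2}$ controlled in Lemma~\ref{le:num_bound}. Second, insert the smoothing bound $\|S_N(s)f\|_{Y_N}\leq \frac{e^{d_1 s}M}{s^{1-\varepsilon}}\|f\|_Z$. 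Third, combine with the stability bound $\|T_N(t-s)\|_{X_N}\leq Me^{\omega(t-s)}$ and integrate:
\[
\|T_N(t)P_Nf-P_NS_N(t)f\|_{X_N}\leq \int_0^t Me^{\omega(t-s)}\cdot C\cdot\frac{e^{d_1 s}M}{s^{1-\varepsilon}}\|f\|_Z\,\mathrm{d}s.
\]
Since $\varepsilon\in(0,1)$, the singularity $s^{-(1-\varepsilon)}$ is integrable on $[0,\tau]$, and bounding $e^{\omega(t-s)}e^{d_1 s}\leq e^{(|\omega|+|d_1|)\tau}$ yields a finite constant $C''$ depending only on $C,M,\tau,\omega,d_1,\varepsilon$; this proves the claimed inequality with $p=0$. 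For general $p$ the same computation carries the factor $N^{-p}$ through unchanged.

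The main obstacle is not the integration — that is routine once the pieces are in place — but rather verifying carefully that the hypotheses of Proposition~\ref{prop:appr_first_gen}, in particular the generator estimate \eqref{eqn:Gen}, really do transfer to elements of the larger space $Z$ via $S_N(s)f$, and that the $N$-dependence is still absorbed correctly. Concretely, Lemma~\ref{le:norm_op} was proved for $u^N(t,\cdot)=S_N(t)u_0^N$ with $u_0^N\in D(\mc{A}_N)$, and one must check that its proof only used $S_N(t)u_0^N\in D(\mc{A}_N)$ together with the exponential bounds of Lemmas~\ref{le:exp_bound} and \ref{le:exp_bound2}, both of which remain available for $g=S_N(s)f$ when $s>0$ and $f\in Z$, since then $g\in Y_N\subset Z_N$. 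The only genuinely new input is the analytic smoothing estimate, which is quoted; the remaining care is bookkeeping of constants and the density argument justifying the integral identity for all $f\in Z$ rather than merely $f\in Y_N$.
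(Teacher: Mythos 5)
Your proposal is correct and follows essentially the same route as the paper: the integral error representation combined with the analytic smoothing estimate $\|S_N(s)\|_{\mathcal L(Z,Y)}\leq Me^{qs}/s^{1-\varepsilon}$, then integrating the integrable singularity $s^{-(1-\varepsilon)}$. The only cosmetic difference is that you justify the integral identity on $Z$ via density from $Y_N$, whereas the paper simply observes directly that $s\mapsto T_N(t-s)P_NS_N(s)f$ is continuous on $[0,t]$ and continuously differentiable on $(0,t]$ for any $f$; both are standard and equivalent.
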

\begin{proof}
Indeed, for an analytic semigroup $S_N(\cdot)$, the map $[0,t]\ni s\mapsto T_N(t-s)P_NS_N(s)f$ is continuous and continuously differentiable on $(0,t]$ for any $f\in X$, and in particular for $f\in Z$. The proof of Proposition \ref{prop:appr_first_gen} can then be followed to obtain
\begin{align*}
\|T_N(t)P_N f - P_NS_N(t)f\|_{X_N}&\leq\int_0^t Me^{\omega (t-s)} C\frac{\|S_N(s) f\|_{Y}}{N^p}\,\mathrm{d}s
\leq
\int_0^t Me^{qt} C\frac{M\|f\|_{Z}}{N^ps^{1-\varepsilon}}\,\mathrm{d}s\\
& \leq C''\frac{\|f\|_Z}{N^p},
\end{align*}
where we used the finiteness of the integral $\int_0^t \frac{1}{s^{1-\varepsilon}}\,\mathrm{d}s$ whenever $\varepsilon\in(0,1)$.
\end{proof}

\section{Some open questions}\label{sect:outlook}

In this last section, we consider some open questions, and sketch possible approaches to improving the bounds achieved, specifically, to increase the value of the exponent of $N$ to 2, or even 3 in the bound of Corollary \ref{corFP}.

Looking at the coefficients in the generators $\mc{A}_N$, the first order terms induce a drift from the intervals $[-h,0]$ and $[1,1+h]$ towards the middle interval, and the relative effect of the diffusion from the second order term decreases as $N$ increases. This motivates the following conjectures:
\begin{conj}\label{conj:C2}
Consider an initial function $u_0$ satisfying the conditions given in Theorem~\ref{thmFP}. Then for any $t>0$ and $N>N_0$ we have
\begin{equation*}
|(S_N(t)u_0)''(-h)|<\|(S_N(t)u_0)''\|_\infty \mbox{ and } |(S_N(t)u_0)''(1+h)|<\|(S_N(t)u_0)''\|_\infty.
\end{equation*}
\end{conj}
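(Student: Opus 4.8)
The plan would be to establish the conjecture in the same spirit as Lemma~\ref{le:exp_bound} — which shows that $\|w\|_\infty$ is not attained at $\pm h$ for $w\in D(\mc{A}_N)$ — and the endpoint analysis in the proof of Lemma~\ref{le:exp_bound2}, but one derivative higher. Fix $t>0$ and $N>N_0$, write $w:=S_N(t)u_0$, and note that $S_N$ is a positive semigroup (its generator is the Fokker--Planck operator $\mc{A}_Nf=\partial_z(\tfrac{1}{2N}((A+C)f)'-(A-C)f)$ with no-flux boundary conditions), so that $w\ge 0$, and in fact $w(-h)>0$ for $t>0$ by the parabolic strong maximum principle. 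Since $A,C\in C^4$, the PDE is non-degenerate, and $S_N$ is analytic, the argument used in Lemma~\ref{le:exp_bound2} gives $w\in C^4([-h,1+h])\cap D(\mc{A}_N^2)$, so that $w''\in C^1$ and the endpoint bookkeeping of Lemma~\ref{le:exp_bound} can be applied to $w''$.

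The first concrete step is to produce a boundary relation for the third derivative. Since $w\in D(\mc{A}_N^2)$, the function $\mc{A}_Nw$ lies in $D(\mc{A}_N)$ and hence satisfies the boundary condition at $-h$; expanding this and eliminating $w'(-h)$ through the first-order relation $w'(-h)=\gamma w(-h)$ with $\gamma=\frac{2N(A-C)(-h)-(A+C)'(-h)}{(A+C)(-h)}>0$ from the proof of Lemma~\ref{le:exp_bound2}, one arrives at $w'''(-h)=\mu\,w''(-h)+\nu\,w(-h)$, with $\mu,\nu$ explicit in $A,C$, their derivatives at $-h$, and $N$; one computes $\mu=\frac{4\bigl(N(A-C)(-h)-(A+C)'(-h)\bigr)}{(A+C)(-h)}>0$ for $N>N_0$, while the leading term of $\nu$ in $N$ is $-\frac{8N^3\,((A-C)(-h))^3}{((A+C)(-h))^3}<0$. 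If $|w''(-h)|=\|w''\|_\infty$ then, $-h$ being the left endpoint, the real function $z\mapsto\Re(w''(z)\overline{w''(-h)})$ attains its maximum at $z=-h$, whence $\Re\bigl(w'''(-h)\overline{w''(-h)}\bigr)\le 0$; combined with the boundary relation, $w(-h)>0$, and $\mu>0$, this is impossible unless $w''(-h)$ has the same sign as $-\nu$, i.e. (in the relevant case of real $u_0$) unless $w''(-h)>0$. Hence the elementary argument already excludes $w''(-h)=-\|w''\|_\infty$, and the conjecture is reduced to ruling out $w''(-h)=+\|w''\|_\infty$.

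For that remaining case the elementary relations are too lossy, and this is where the main difficulty lies: $\gamma$ and $\nu$ grow like $N$ and $N^3$ respectively while $\mu$ only grows like $N$, so the constraint $w'''(-h)\le 0$ gives merely $0<\|w''\|_\infty=w''(-h)\le\frac{|\nu|}{\mu}\,w(-h)=O(N^2)\,w(-h)$, which is perfectly consistent with $-h$ being the maximum. To close the gap one has to exploit the structure of the orbit inside the boundary layer $[-h,0]$, where for $t>0$ the strong inward drift (the first-order term of $\mc{A}_N$) forces $w$ to be close to a quasi-stationary profile of controlled curvature. Concretely, I would try to build, on a small collar $[-h,-h+\varepsilon]$, a local supersolution (barrier) for the parabolic equation satisfied by $w''$ that dominates $w''$ at $-h+\varepsilon$, respects the boundary relation at $-h$ — this relation is \emph{inhomogeneous}, through the term $\nu\,w(-h)$, so the barrier has to be constructed jointly with the bound on $\|w\|_\infty$ provided by Lemma~\ref{le:exp_bound} — and whose value at $-h$ is strictly below $\sup_{[-h,1+h]}|w''|$, so that comparison yields $|w''(-h)|<\sup_{[-h,1+h]}|w''|$. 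Carrying such a barrier construction through uniformly in $N>N_0$ and $t>0$ is the crux of the matter, and is precisely why the statement is left only as a conjecture; an alternative would be to prove a differential inequality for $t\mapsto\max\{|(S_N(t)u_0)''(-h)|,|(S_N(t)u_0)''(1+h)|\}$ against $\|(S_N(t)u_0)''\|_\infty$, once again driven by the positivity of $S_N$.
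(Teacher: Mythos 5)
This statement is a \emph{conjecture} in the paper (Conjecture~\ref{conj:C2}); the paper offers no proof of it, only the downstream Propositions \ref{prop:C2}--\ref{prop:C3} that would follow if it held. You correctly recognize this and, importantly, you do not claim to have closed the gap, so there is no overreach here. Against that backdrop, your partial analysis is a genuine contribution rather than a failed proof attempt, and it is worth assessing on its own merits.

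The boundary relation you derive is correct in structure. Since $w=S_N(t)u_0\in D(\mc{A}_N^2)$ for $t>0$, the function $g:=\mc{A}_Nw$ lies in $D(\mc{A}_N)$, and a short computation (which you elide but which checks out) shows that the boundary condition for $g$ at $-h$ again takes the Robin form $g'(-h)=\gamma g(-h)$ with the same $\gamma$. Expanding and eliminating $w'(-h)=\gamma w(-h)$ then produces $w'''(-h)=\mu\,w''(-h)+\nu\,w(-h)$ with $\mu=\frac{4\bigl(N(A-C)(-h)-(A+C)'(-h)\bigr)}{(A+C)(-h)}$, matching your formula, and $\nu\sim -\frac{8N^3((A-C)(-h))^3}{((A+C)(-h))^3}$ to leading order. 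Combined with positivity of the solution for $t>0$ (which requires $u_0\not\equiv 0$, worth stating explicitly) and the left-endpoint one-sided inequality $w'''(-h)w''(-h)\le 0$, this does rule out $w''(-h)=-\|w''\|_\infty$. Two small caveats. First, the inequality $\mu>0$ does not quite follow from Assumptions~\ref{ass:AC} as stated: the assumption $2N_0|A-C|>|A'+C'|$ on $[-\eta,0]$ yields $2N(A-C)(-h)>|(A+C)'(-h)|$, hence $N(A-C)(-h)>|(A+C)'(-h)|/2$, which leaves a window where $N(A-C)(-h)<(A+C)'(-h)$ is conceivable when $(A+C)'(-h)>0$; for large $N$ this is immaterial, but it means the clean sign claim needs a slightly strengthened hypothesis or a large-$N$ caveat. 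Second, the reduction only excludes one sign of $w''(-h)$, as you say.

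Your diagnosis of why the elementary step stops short is the right one: the relation caps $w''(-h)$ by $O(N^2)\,w(-h)$, which is fully consistent with $-h$ being the maximizer of $|w''|$, so some additional input about the orbit's behaviour in the boundary collar $[-h,0]$ is genuinely needed. The barrier/supersolution route you sketch is a sensible line of attack; the real obstacle — which you correctly flag — is making the comparison uniform in $N$ and $t$ despite the inhomogeneous term $\nu\,w(-h)$ in the boundary relation, whose magnitude grows like $N^3$. In short: your analysis is sound as far as it goes, correctly stops where the difficulty actually lies, and is an honest account of the state of the problem rather than a proof of a statement the paper itself only conjectures.
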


\begin{conj}\label{conj:C3}
Consider an initial function $u_0\in C^3([0,1])$ with $u_0'''(0)=u_0'''(1)$ satisfying the conditions given in Theorem~\ref{thmFP}. Then for any $t>0$ and $N>N_0$ we have
\begin{equation*}
|(S_N(t)u_0)'''(-h)|<\|(S_N(t)u_0)'''\|_\infty \mbox{ and } |(S_N(t)u_0)'''(1+h)|<\|(S_N(t)u_0)'''\|_\infty.
\end{equation*}
\end{conj}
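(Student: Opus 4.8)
The plan is to adapt the maximum-principle arguments of Lemmas \ref{le:exp_bound} and \ref{le:exp_bound2} to the third derivative. Fix $N>N_0$ and $t>0$, put $w:=S_N(t)u_0$, and argue only at the left endpoint $-h$, the right endpoint being symmetric (there $A-C<0$). Analyticity of $S_N$ together with parabolic regularity for the coefficients (I will assume $A,C$ are as smooth as the argument requires; $C^4$ already gives $w\in C^4$ up to the boundary, and more yields higher regularity) ensures that $w,\ \mc{A}_Nw,\ \mc{A}_N^2w\in D(\mc{A}_N)$, so each satisfies the no-flux boundary condition at $-h$. Suppose, for contradiction, that $|w'''(-h)|=\|w'''\|_\infty$; excluding the degenerate case $w'''\equiv 0$, this sup is positive, and $z\mapsto\Re(w'''(z)\overline{w'''(-h)})$ then attains its maximum over $[-h,1+h]$ at $-h$, forcing $\Re(w^{(4)}(-h)\overline{w'''(-h)})\le 0$. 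The whole proof would reduce to establishing the strict reverse inequality $\Re(w^{(4)}(-h)\overline{w'''(-h)})>0$.

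The first concrete step is to harvest the boundary relations produced by the hierarchy of no-flux conditions. With $f_1=(A+C)/2N$ and $f_2=C-A$ as in Lemma \ref{le:semigroup}, the condition on $w$ gives $w'(-h)=\gamma\,w(-h)$, $\gamma=(2N(A-C)(-h)-(A+C)'(-h))/(A+C)(-h)>0$ for $N>N_0$; imposing the same condition on $\mc{A}_Nw=f_1w''+(2f_1'+f_2)w'+(f_1''+f_2')w$ and eliminating $w'(-h)$ yields $w'''(-h)=\mu\,w''(-h)+\nu\,w(-h)$, where $\mu=(4N(A-C)(-h)-4(A+C)'(-h))/(A+C)(-h)>0$ for $N$ large and $\nu$ is an explicit constant; imposing it on $\mc{A}_N^2w$ gives in the same way a relation for $w^{(5)}(-h)$ in terms of $w^{(4)}(-h)$ and even lower-order data. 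As in the endpoint case of Lemma \ref{le:exp_bound2}, differentiating these trace identities in $\tau$ along the orbit $v_\tau:=S_N(\tau)u_0$ and commuting $\partial_\tau$ with $\partial_z$ lets one re-express the quantities $\Re((\mc{A}_Nw)^{(j)}(-h)\overline{w^{(j)}(-h)})$, and assembling them is meant to write $\Re(w^{(4)}(-h)\overline{w'''(-h)})$ as a combination of $w(-h)$, $w''(-h)$ and their time derivatives, with coefficients whose leading order in $N$ is controlled.

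The obstacle is structural: the boundary conditions fix the odd-order boundary derivatives $w'(-h),w'''(-h),w^{(5)}(-h),\dots$ in terms of the even-order ones $w(-h),w''(-h),w^{(4)}(-h),\dots$, but leave the latter free, so the sign of $\Re(w^{(4)}(-h)\overline{w'''(-h)})$ cannot be read off algebraically the way $\gamma>0$ was in Lemma \ref{le:exp_bound}; moreover the conjecture is $d$-free, so the ``large $d$'' device that makes Lemmas \ref{le:exp_bound} and \ref{le:exp_bound2} work is unavailable, and genuinely dynamical information is needed. I would try to supply it by a barrier/comparison argument localised to the thin strip $[-h,0]$ (width $1/2N$): there $A-C$ is bounded below by a positive constant, and $\mc{A}_N$ is an $O(1/N)$ diffusion on top of a transport whose characteristic speed $\approx A-C$ points into $(0,1)$, so one should be able to build explicit super- and sub-solutions on $[-h,0]$ that bound $w'''$ and its one-sided derivative at $-h$ by the trace of $w'''$ at $z=0$, and deduce that $|w'''(-h)|$ stays strictly below the global maximum of $|w'''|$, which is attained in $[0,1]$. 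Checking that such barriers can be constructed uniformly in $N>N_0$ and that the $O(1/N)$ diffusive corrections do not destroy the strict inequality is the step I expect to be the real difficulty; the hypothesis $u_0'''(0)=u_0'''(1)$ should enter exactly in making the two boundary strips compatible with this comparison. A possible alternative is a higher-order conjugation in the spirit of the substitution $w\mapsto gw$ of Lemma \ref{le:semigroup}, chosen so that $w'''$ obeys, up to $O(1/N)$ errors, a heat-type equation with a Dirichlet-type condition at $-h$, for which the desired inequality is a strong maximum principle.
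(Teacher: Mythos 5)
The statement you are attempting to prove is Conjecture~\ref{conj:C3}, which the paper leaves open: Section~\ref{sect:outlook} is explicitly titled ``Some open questions'' and offers no proof of either Conjecture~\ref{conj:C2} or Conjecture~\ref{conj:C3}, only conditional consequences that would follow if they held (Propositions~\ref{prop:C2}, \ref{prop:C3} and the two closing theorems). So there is no proof in the paper against which your attempt can be compared, and you should not represent an argument for this statement as filling in something the paper already establishes.

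Your proposal is, in any case, not a proof and you acknowledge as much. You set up the maximum-principle framework correctly: assuming $|w'''(-h)|=\|w'''\|_\infty$ with $w=S_N(t)u_0$ forces $\Re\bigl(w^{(4)}(-h)\,\overline{w'''(-h)}\bigr)\le 0$, and the goal would be the strict reverse inequality. You also correctly identify the structural obstruction: the hierarchy of no-flux conditions on $w$, $\mc{A}_Nw$, $\mc{A}_N^2w$ fixes the odd-order boundary derivatives in terms of the even-order ones but leaves the latter unconstrained, so no algebraic sign argument analogous to $\gamma>0$ in Lemma~\ref{le:exp_bound} is available; and the conjectured inequality has no free constant $d$, so the ``large $d$'' device that powers Lemmas~\ref{le:exp_bound} and~\ref{le:exp_bound2} cannot be invoked. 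The barrier/comparison strategy you sketch on the strip $[-h,0]$ is a plausible direction and is consonant with the heuristic the paper itself gives just before the conjectures (the first-order drift points away from the boundary strips and the diffusion is $O(1/N)$), but its decisive step---constructing super- and sub-solutions for $w'''$ uniformly in $N>N_0$ that survive the $O(1/N)$ diffusive correction and yield a strict inequality---is left unverified. As it stands, this is an informed reconnaissance of the problem, not a resolution; if you believe the barrier construction can be carried out, that is where all the work remains.
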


In essence, these corollaries state that the second and third derivatives cannot achieve their maximum at the endpoints if we are on an orbit starting from a non-negative function with support smaller than the whole interval. These conjectures would lead to the following results.

\begin{prop}\label{prop:C2}
If Conjecture \ref{conj:C2} holds, then for any large enough constant $d>0$ we have for every $N>N_0$ that
\[
\left\|\left.e^{-dt}S_N(t)\right|_{Z_{2,N}}\right\|_{Z_{2,N}}\leq 1,
\]
where $Z_{2,N}:=D(\mc{A}_N)$ is equipped with the $C^2$ norm.
\end{prop}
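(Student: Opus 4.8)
The plan is to repeat, one differentiation order higher, the arguments behind Lemmas~\ref{le:exp_bound} and~\ref{le:exp_bound2}, now tracking $\|w''\|_\infty$ alongside $\|w'\|_\infty$ and $\|w\|_\infty$ along an orbit $w(t)=S_N(t)v_0$. First I would record the regularity input: since $A,C\in C^4[-\eta,1+\eta]$ and the equation is non-degenerate, classical parabolic regularity gives $w(t)\in C^4([-h,1+h])$ for every $t>0$ (exactly as in Lemma~\ref{le:exp_bound2}, now using one further derivative of the coefficients), while analyticity of $S_N$ gives $w(t)\in D(\mc{A}_N^2)$ (as in the proof of Lemma~\ref{le:exp_bound2}); consequently both $w(t)$ and $\mc{A}_Nw(t)$ satisfy the boundary conditions, $\mc{A}_Nw(t)\in D(\mc{A}_N)\subset Z_{2,N}$, and $\partial_{zz}\partial_t w=\partial_t\partial_{zz}w=(\mc{A}_Nw)''$ along the orbit. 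The generator of $S_N(\cdot)|_{Z_{2,N}}$ is $\mc{A}_N|_{Z_{2,N}}$, and by density it suffices to prove the contraction bound along such smooth orbits.

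The core step, exactly as in the proofs cited, is to show that for $d$ large the map $t\mapsto e^{-dt}\|w(t)\|_{C^2}=e^{-dt}\bigl(\|w''(t)\|_\infty+\|w'(t)\|_\infty+\|w(t)\|_\infty\bigr)$ is non-increasing. Writing $g:=\mc{A}_Nw-dw$ and picking points $r,q,s$ at which $|w''|,|w'|,|w|$ attain their maxima (the $w''$-term being dropped at the easily handled times where $w''\equiv0$), the maximum-principle argument of Lemmas~\ref{le:exp_bound} and~\ref{le:exp_bound2} reduces the claim to
\[
\Re\!\left(g''(r)\,\overline{\tfrac{w''(r)}{|w''(r)|}}\right)+\Re\!\left(g'(q)\,\overline{\tfrac{w'(q)}{|w'(q)|}}\right)+\Re\!\left(g(s)\,\overline{\tfrac{w(s)}{|w(s)|}}\right)\le0
\]
for $d$ large, uniformly in $t>0$ and $N>N_0$. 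The $s$-term is bounded precisely as in Lemma~\ref{le:exp_bound} (the maximizer of $|w|$ is always interior) and the $q$-term, in both the interior and the endpoint cases, precisely as in Lemma~\ref{le:exp_bound2}, producing contributions of the form $(\mathrm{const}-d)\|w\|_\infty$ and $(\mathrm{const}-d)\|w'\|_\infty+\mathrm{const}\cdot\|w\|_\infty$, with constants depending only on $\|A\|_{C^2},\|C\|_{C^2}$ and on $1/N\le1$.

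The new ingredient is the $w''$-term. Expanding,
\[
g''=\tfrac{A+C}{2N}\,w^{(4)}+\Bigl(\tfrac{2(A+C)'}{N}+C-A\Bigr)w'''+\Bigl(\tfrac{3(A+C)''}{N}-3(A-C)'-d\Bigr)w''+(\cdots)\,w'+(\cdots)\,w ,
\]
and at an \emph{interior} maximizer $r$ of $|w''|$ the function $z\mapsto\Re\bigl(w''(z)\overline{w''(r)}\bigr)$ has an interior maximum, so $\Re\bigl(w'''(r)\overline{w''(r)}\bigr)=0$ and $\Re\bigl(w^{(4)}(r)\overline{w''(r)}\bigr)\le0$; since $A+C>0$, the $w^{(4)}$ and $w'''$ terms drop out and the $w''$-term is bounded by $(\mathrm{const}-d)\|w''\|_\infty+\mathrm{const}\cdot\|w'\|_\infty+\mathrm{const}\cdot\|w\|_\infty$, the constants now controlled by $\|A\|_{C^4},\|C\|_{C^4}$ and $1/N\le1$. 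Summing the three estimates, the lower-order cross terms are absorbed into the $\|w'\|_\infty$- and $\|w\|_\infty$-coefficients, so the displayed inequality holds as soon as $d$ exceeds a threshold depending only on $\|A\|_{C^4}$ and $\|C\|_{C^4}$; this gives the asserted contraction.

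The main obstacle — and the point where a genuine hypothesis is needed — is the possibility that $|w''|$ attains its maximum at an endpoint $-h$ or $1+h$. The only boundary relation supplied by $w(t)\in D(\mc{A}_N)$ links $w'$ to $w$ at those points and says nothing about $w''$; differentiating it in time merely produces further relations involving still higher derivatives, and at a boundary maximizer $r$ one has no sign information on $\Re\bigl(w^{(4)}(r)\overline{w''(r)}\bigr)$, while the $w'''$-term does not vanish either (its coefficient being of no definite sign), so the boundary-condition device of Lemma~\ref{le:exp_bound2} cannot be repeated at this order. Conjecture~\ref{conj:C2} is exactly the statement that this bad case never arises along the relevant orbits, i.e.\ that the maximizer $r$ of $|w''|$ is always interior; granting it, only the interior estimate above is ever invoked and the argument closes. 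I therefore expect the only real work beyond this bookkeeping to be (i) assembling the $C^4$ regularity of the orbit up to the boundary and (ii) the routine but mildly delicate verification that $t\mapsto\max_z|w''(t,z)|$ satisfies the one-sided derivative bound used in the reduction — both standard given the analyticity and smoothness already established.
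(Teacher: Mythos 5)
Your proposal is correct and takes essentially the same route as the paper's proof, which is only a terse pointer: the paper says to follow the proof of Lemma~\ref{le:exp_bound2} at one higher differentiation order and to use Conjecture~\ref{conj:C2} to rule out the boundary-maximizer case for $w''$ — exactly the argument you flesh out, including the correct expansion of $(\mc{A}_N w-dw)''$, the vanishing of the $w^{(4)}$ and $w'''$ contributions at an interior maximizer of $|w''|$, and the observation that the boundary-condition device used for $w'$ in Lemma~\ref{le:exp_bound2} has no analogue for $w''$, which is precisely where the conjecture enters.
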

\begin{proof}
We have seen in the proof of Lemma \ref{le:semigroup} that the restriction to $Z_{2,N}$ is still a $C_0$-semigroup due to the equivalence of norms.
It is enough to follow the proof of Lemma \ref{le:exp_bound2}, and note that we by our conjecture do not need to worry about the case when the relevant derivatives take their maximum at the boundaries.
\end{proof}

\begin{prop}\label{prop:C3}
If Conjectures \ref{conj:C2} and \ref{conj:C3} hold, and the semigroups restricted to appropriate subspaces $Z_{3,N}\subset C^3([-h,1+h])$ are strongly continuous, then for any large enough constant $d>0$ we have for every $N>N_0$ that
\[
\left\|\left.e^{-dt}S_N(t)\right|_{Z_{3,N}}\right\|_{Z_{3,N}}\leq 1.
\]
\end{prop}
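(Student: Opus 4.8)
The plan is to follow the proof of Lemma~\ref{le:exp_bound2}, exactly as was done for the $C^2$ norm in Proposition~\ref{prop:C2}, but now splitting the $C^3$ norm as $\|w\|_{C^3}=\|w\|_\infty+\|w'\|_\infty+\|w''\|_\infty+\|w'''\|_\infty$ and controlling all four pieces at once. First I would record the regularity needed for the pointwise arguments. As in Lemma~\ref{le:exp_bound2}, analyticity of $S_N$ together with classical regularity theory for the non-degenerate PDE guarantees that for $v_0\in Z_{3,N}$ and $t>0$ the function $w:=S_N(t)v_0$, together with $\mc{A}_Nw=\partial_t v_t$, is smooth enough that $w,w',w'',w'''$ and the first three derivatives of $\mc{A}_Nw$ are continuous; write $\mk{D}_{3,N}$ for the dense subspace of $Z_{3,N}$ on which these computations are legitimate, into which $S_N(t)$ maps $Z_{3,N}$ for every $t>0$, and note that the generator of $\left.S_N(t)\right|_{Z_{3,N}}$ is then $\left.\mc{A}_N\right|_{Z_{3,N}}$, just as in the discussion preceding Lemma~\ref{le:exp_bound2}.

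Fix $N>N_0$, a large constant $d>0$ to be specified, a nonzero $v_0\in Z_{3,N}$ and $t>0$, and put $w:=S_N(t)v_0\in\mk{D}_{3,N}$. Choose $s_0,s_1,s_2,s_3\in[-h,1+h]$ with $|w^{(j)}(s_j)|=\|w^{(j)}\|_\infty$, so that $\|w\|_{C^3}=\sum_{j=0}^{3}|w^{(j)}(s_j)|$. As in the earlier proofs, it suffices to show that
\begin{equation*}
F:=\sum_{j=0}^{3}\Re\!\left((\mc{A}_Nw-dw)^{(j)}(s_j)\,\overline{\frac{w^{(j)}(s_j)}{|w^{(j)}(s_j)|}}\right)
\end{equation*}
is strictly negative for all sufficiently large $d$, uniformly in $t>0$, $N>N_0$ and $0\ne w\in\mk{D}_{3,N}$ (summands with $w^{(j)}(s_j)=0$ are dropped, and since $w\ne 0$ the $j=0$ term has $|w(s_0)|=\|w\|_\infty>0$); this makes $t\mapsto e^{-dt}\|S_N(t)v_0\|_{C^3}$ non-increasing, which is the assertion.

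Each summand is treated by differentiating $\mc{A}_Nw-dw$ as in \eqref{eqn:generator} and its higher-order analogues. For $j=0$, Lemma~\ref{le:exp_bound} shows $s_0$ is interior and the summand is at most $\big(\tfrac{\|(A+C)''\|_\infty}{2N}+\|(A-C)'\|_\infty-d\big)\|w\|_\infty$. For $j=1$, Lemma~\ref{le:exp_bound2} already covers both the case $s_1$ interior (where $\Re(w''(s_1)\overline{w'(s_1)})=0$ and $\Re(w'''(s_1)\overline{w'(s_1)})\le 0$ annihilate the $w''$- and $w'''$-terms) and the case $s_1\in\{-h,1+h\}$ (handled by differentiating the boundary condition, i.e.\ via the constant $\gamma$ there), yielding in both cases a bound of the form $(c_1-d)\|w'\|_\infty+c_{1,0}\|w\|_\infty$. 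For $j=2$, Conjecture~\ref{conj:C2} excludes $s_2\in\{-h,1+h\}$, exactly as in Proposition~\ref{prop:C2}, so $s_2$ is interior; then $\Re(w'''(s_2)\overline{w''(s_2)})=0$ and $\Re(w^{(4)}(s_2)\overline{w''(s_2)})\le 0$, and since the coefficient of $w^{(4)}$ in $(\mc{A}_Nw)''$ equals $\tfrac{(A+C)(s_2)}{2N}>0$, the two top-order terms vanish, leaving $(c_2-d)\|w''\|_\infty+c_{2,1}\|w'\|_\infty+c_{2,0}\|w\|_\infty$. For $j=3$, Conjecture~\ref{conj:C3} likewise forces $s_3$ interior; then $\Re(w^{(4)}(s_3)\overline{w'''(s_3)})=0$ and $\Re(w^{(5)}(s_3)\overline{w'''(s_3)})\le 0$, the coefficient of $w^{(5)}$ in $(\mc{A}_Nw)'''$ being $\tfrac{(A+C)(s_3)}{2N}>0$, so this summand is at most $(c_3-d)\|w'''\|_\infty+c_{3,2}\|w''\|_\infty+c_{3,1}\|w'\|_\infty+c_{3,0}\|w\|_\infty$. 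All the constants here arise from $C^1$-, $C^2$- or $C^3$-norms of $A\pm C$ (some divided by $2N$) and are bounded uniformly in $N>N_0$, $t>0$ and $w$. Summing and using $|w^{(i)}(s_j)|\le\|w^{(i)}\|_\infty=|w^{(i)}(s_i)|$ for $i<j$ gives
\begin{equation*}
F\le\sum_{i=0}^{3}\Big(c_i-d+\sum_{j>i}c_{j,i}\Big)\,\|w^{(i)}\|_\infty,
\end{equation*}
with $c_0:=\tfrac{\|(A+C)''\|_\infty}{2N}+\|(A-C)'\|_\infty$, which is negative once $d>\max_{0\le i\le 3}\big(c_i+\sum_{j>i}c_{j,i}\big)$, a bound independent of $N>N_0$ and $t>0$. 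This proves the proposition.

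The delicate points are not the computation above, which is a lengthy but routine elaboration of Lemmas~\ref{le:exp_bound} and \ref{le:exp_bound2}, but the two inputs we have granted ourselves. The first is the conjectures themselves: they are precisely what lets us avoid the endpoints for the second and third derivatives, and establishing them is presumably the genuinely hard part of the program. The second is the functional-analytic and regularity bookkeeping: one must pin down the "appropriate" space $Z_{3,N}$ so that $\left.S_N(t)\right|_{Z_{3,N}}$ is a $C_0$-semigroup (an explicit hypothesis here) and so that orbits land in a subspace in which $(\mc{A}_Nw)'''$ genuinely makes sense --- which, via the usual elliptic bootstrap, needs embeddings such as $D(\mc{A}_N^{3})\hookrightarrow C^{6}$ and may therefore force a slightly stronger smoothness assumption on $A$ and $C$ than $C^4$, or a more careful choice of core. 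Finally, since Conjectures~\ref{conj:C2}--\ref{conj:C3} are phrased for orbits of the special initial data of Theorem~\ref{thmFP}, deducing the bound for every $v_0\in Z_{3,N}$ requires first arguing on the dense set of such orbits and then extending by the assumed strong continuity --- a further reason that hypothesis is needed.
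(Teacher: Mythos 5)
The paper does not actually include a proof of Proposition~\ref{prop:C3}: after the statement there is no proof environment, only the next conjecture. The only guidance is the one-line proof of Proposition~\ref{prop:C2}, which says to follow Lemma~\ref{le:exp_bound2} and use the conjecture to avoid boundary maxima. Your proposal is a careful and faithful elaboration of exactly that plan, extended from $C^1$ to $C^3$: split the $C^3$ norm into the four sup-norms, use the second-derivative test at each interior maximum point to kill the two top-order terms (using $(A+C)>0$ so that $\Re(w^{(j+2)}(s_j)\overline{w^{(j)}(s_j)})\le 0$ helps rather than hurts), invoke Lemma~\ref{le:exp_bound} for $j=0$ and the boundary computation of Lemma~\ref{le:exp_bound2} for $j=1$, and use Conjectures~\ref{conj:C2} and \ref{conj:C3} to force $s_2,s_3$ interior. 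The resulting lower-triangular system of estimates is negative for $d$ large uniformly in $N$, which is the right mechanism.

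Your two caveats are both correct, and it is worth stressing the second: the conjectures are phrased only for orbits of the special non-negative initial data of Theorem~\ref{thmFP}, so the conclusion ``$|w^{(j)}(\pm\text{endpoint})|<\|w^{(j)}\|_\infty$'' is, as stated, only available on a cone of initial conditions. Since this exclusion property is not preserved under taking linear combinations, passing from the cone to the full operator-norm bound on $Z_{3,N}$ genuinely requires the extra density/strong-continuity hypothesis that the Proposition includes (and some argument that the relevant orbits are in fact dense); the paper's own Proposition~\ref{prop:C2} elides the identical issue. You are right to flag it rather than sweep it under the rug. With that understood, the proposal is sound and matches the intended, though unwritten, argument.
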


A final - less evident - conjecture asserts that not only do the second derivatives not take their largest value at the boundaries, but that they actually stay very small as a function of $N$ in a whole vicinity of the boundaries.

\begin{conj}\label{conj:C2_bound}
Consider an initial function $u_0$ satisfying the conditions given in Theorem~\ref{thmFP}. Then for any $\tau>0$ there exists a constant $K'$ such that for any $t\in[0,\tau]$ and $N>N_0$ we have
\[
\max_{z\in[-h,0]\cup[1,1+h]}|(S_N(t)u_0)''(z)|\leq K'/N.
\]
\end{conj}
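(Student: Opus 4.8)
The plan is to exploit three features of the dynamics near the endpoints: there the drift $A-C$ points \emph{inward} (it is positive on $[-h,0]$ and negative on $[1,1+h]$), the diffusion coefficient is of order $1/N$, and $u_0$ together with the no-flux boundary conditions is ``compatible with zero''. Split $\mc{A}_N=\mc{T}_N+\mc{D}_N$ into the transport part $\mc{T}_Ng=-((A-C)g)'$ and the order-$1/N$ diffusion $\mc{D}_Ng=\frac1{2N}((A+C)g)''$, and use Duhamel's formula
\[
S_N(t)u_0=e^{t\mc{T}_N}u_0+\frac1{2N}\int_0^t e^{(t-s)\mc{T}_N}\big(((A+C)S_N(s)u_0)''\big)\dd s .
\]
The structural observation to exploit is that $e^{t\mc{T}_N}u_0$ vanishes identically on a one-sided neighbourhood of each endpoint: the characteristic field $A-C$ flows away from $-h$ and $1+h$, and the no-flux condition forces the transported profile to vanish there (since $(A-C)(-h)>0$ and $(A-C)(1+h)<0$), so every backward characteristic issued from $[-h,z_0(t)]$ --- where $z_0(t)$ is the image of $0$ under the forward flow of $A-C$, which is $\geq0$ and $>0$ for $t>0$ when $A(0)>0$ --- reaches time $0$ at a point $\leq0$, where $u_0\equiv0$; hence $e^{t\mc{T}_N}u_0\equiv0$ on $[-h,z_0(t)]$, and symmetrically near $1+h$.

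Consequently $S_N(t)u_0$ is \emph{purely diffusive} on these neighbourhoods, and --- since they are backward-invariant under $\mc{T}_N$ while $\mc{D}_N$ is a differential operator --- the Duhamel integral only ever feeds back $S_N(s)u_0$ on neighbourhoods of the same kind, together with a ``source'' coming from the inner edge $z\approx z_0(s)$, which sits at a fixed distance from the endpoint and can be handled by ordinary interior parabolic regularity. To make this quantitative I would rescale the thin sliver $[-h,0]$: with $\xi=2Nz+1\in[0,1]$ and $\tau'=2Nt$ the equation becomes $\partial_{\tau'}v=\partial_{\xi\xi}(\tilde a\,v)-\partial_\xi(\tilde b\,v)$, $\tilde a(\xi)=(A+C)\big(\tfrac{\xi-1}{2N}\big)$, $\tilde b(\xi)=(A-C)\big(\tfrac{\xi-1}{2N}\big)$ --- uniformly parabolic, with coefficients that are moreover frozen to within $O(1/N)$; the no-flux condition at $z=-h$ becomes the Robin condition $\partial_\xi(\tilde a v)(0)=\tilde b(0)v(0)$, which is dissipative exactly by the choice of $N_0$ in Assumptions~\ref{ass:AC} (the same computation as in the proof of Lemma~\ref{le:exp_bound}), and the initial datum is $\equiv0$. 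A maximum principle on this fixed geometry then bounds $\|v\|_{L^\infty}$ on the sliver by the size of the source, and uniform-in-$N$ boundary Schauder estimates up to the Robin boundary upgrade this to a $C^2$ bound there; undoing the scaling turns $|v_{\xi\xi}|\lesssim\varepsilon_N$ into $|(S_N(t)u_0)''|\lesssim N^2\varepsilon_N$ on $[-h,0]$.

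It then remains to bound, uniformly for $t\in[0,\tau]$, the size $\varepsilon_N$ of $S_N(s)u_0$ on a fixed neighbourhood of each endpoint, and this is where the vanishing of $u_0$ must enter. One concrete route uses the explicit stationary density $\pi_N(z)\propto(A+C)(z)^{-1}\exp\big(2N\!\int_0^z\tfrac{A-C}{A+C}\big)$ of the diffusion generated by $\mc{A}_N^{*}$: by reversibility $(S_N(t)u_0)(z)=\pi_N(z)\,\mathbb{E}^z[u_0(Z_t)/\pi_N(Z_t)]$, and near an endpoint $\pi_N$ is exponentially small relative to its interior values while $u_0$ is small there because it vanishes, so $\varepsilon_N$ is very small. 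Carrying this out, if $u_0$ vanishes to order $3$ at the endpoints one obtains $\varepsilon_N=O(1/N^3)$ and hence $|(S_N(t)u_0)''|=O(1/N)$ on $[-h,0]\cup[1,1+h]$.

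I expect the main difficulty to be making the local step rigorous and uniform in $N$: closing the bootstrap between $\|v\|_{L^\infty}$ and $\|v\|_{C^2}$ on the sliver while keeping control of the moving inner-edge source at $z\approx z_0(t)$, and --- in the probabilistic route --- controlling the exponential-in-$N$ normalisation of $\pi_N$ without any sign assumption on $A-C$ inside $(0,1)$ (so that the convenient ``stable interior equilibrium'' picture is not available). I also expect that the full order $1/N$ genuinely requires $u_0\in C^3$ with its zero-extension still of class $C^3$ --- so that $u_0,u_0',u_0''$ and $u_0'''$ all vanish at the endpoints, in the spirit of the hypotheses of Conjecture~\ref{conj:C3} --- since with merely the $C^2$ regularity of Assumptions~\ref{ass:u0} the argument above only yields $|(S_N(t)u_0)''|=o(1)$ on the slivers, and the statement should probably be strengthened accordingly.
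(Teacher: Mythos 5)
This statement appears in the paper as an explicit \emph{conjecture}: the author presents it as an open problem and only derives conditional consequences from it in the final two theorems of Section~\ref{sect:outlook}. There is therefore no proof in the paper to compare your attempt against, and I have to assess it on its own terms.

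The heuristic you lay out --- inward-pointing drift on $[-h,0]\cup[1,1+h]$, diffusion of size $O(1/N)$, and an initial datum vanishing at the endpoints should jointly keep the solution and its derivatives small there --- is exactly the right picture and is in fact what motivates the conjecture. But the argument as written does not close. The opening Duhamel decomposition $S_N(t)u_0=e^{t\mc{T}_N}u_0+\frac1{2N}\int_0^t e^{(t-s)\mc{T}_N}\big(\cdot\big)\dd s$ is not available in this form: $\mc{D}_N$ is second order, hence not $\mc{T}_N$-bounded, so the usual perturbation theorem fails; and, more seriously, both $-h$ and $1+h$ are \emph{inflow} boundaries for the characteristic field $A-C$ (backward characteristics from the slivers exit the interval through $-h$, resp. $1+h$), so $e^{t\mc{T}_N}$ is not a semigroup on $C[-h,1+h]$ without prescribing inflow data. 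Whatever inflow condition one chooses gives $\mc{T}_N$ a domain incompatible with $D(\mc{A}_N)$, which breaks the variation-of-constants identity on orbits of $S_N$. Your claim that the slivers are ``backward-invariant under $\mc{T}_N$'' is therefore not correct as stated, and the assertion that the Duhamel integrand vanishes near $-h$ silently uses a zero-extension of $((A+C)S_N(s)u_0)''$ past $-h$, which is unjustified: that function is not a priori zero at $-h$.

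Even granting the decomposition, the quantitative core is a sketch, not a proof. On the rescaled sliver, the inner-edge ``source'' at $\xi=1$ is $S_N(s)u_0(0)$, which is only $O(1)$ (bounded by $e^{ds}\|u_0\|_\infty$, not small in $N$), so the maximum principle on the frozen geometry gives $\|v\|_{L^\infty}=O(1)$, and the entire $O(1/N^3)$ gain is postponed to the stationary-density paragraph. That paragraph is not carried out: the ``reversibility'' identity $(S_N(t)u_0)(z)=\pi_N(z)\,\mathbb{E}^z[u_0(Z_t)/\pi_N(Z_t)]$ needs to be derived and justified for a forward (Fokker--Planck) semigroup, the normalisation of $\pi_N$ and its $N$-dependence must be controlled without any sign assumption on $A-C$ inside $(0,1)$, and the boundary Schauder estimates need to be made uniform on the parabolic time window $[0,2N\tau]$, which grows with $N$. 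Finally, you yourself conclude that the bound likely requires $u_0\in C^3$ vanishing to third order at the endpoints, i.e., that the conjecture as stated under Assumptions~\ref{ass:u0} may fail; that is a useful observation (and consistent with the extra hypothesis in Conjecture~\ref{conj:C3}), but it also means the attempt does not establish the statement as posed.
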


\begin{theo}
Assuming Conjecture \ref{conj:C2}, the estimate in Corollary \ref{corFP} takes the form
$$\left|v\left(t,\frac{k}{N}\right) - p_k(t)\right|\leq\frac{K}{N^2},\quad  t\in[0,t_0], \quad k=0,1,2,\ldots , N. $$
\end{theo}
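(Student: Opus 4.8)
\noindent The plan is to rerun the proof of Theorem~\ref{thmFP} from Section~\ref{sec:OpSemigroup} essentially unchanged, but with the intermediate Banach space $Y_N=(D(\mc A_N),\|\cdot\|_{\mc A_N-dI})$ replaced by $Y_N:=(D(\mc A_N),\|\cdot\|_{C^2})$. This is exactly where Conjecture~\ref{conj:C2} enters: by Proposition~\ref{prop:C2} there is a constant $d$, independent of $N>N_0$, with $\|e^{-dt}S_N(t)\|_{Y_N}\le 1$ for all $t\ge 0$. For each fixed $N$ the norms $\|\cdot\|_{\mc A_N-dI}$ and $\|\cdot\|_{C^2}$ are equivalent on $D(\mc A_N)$ (one inequality is the estimate at the end of the proof of Lemma~\ref{le:semigroup}, the other is immediate from the boundedness of $\mc A_N:C^2\to C$), so $D(\mc A_N)$ is complete for the $C^2$ norm, it is dense in $\mc X_N$, and by analyticity of $S_N$ it is invariant. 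Hence all the structural hypotheses of Proposition~\ref{prop:appr_first_gen} hold with this $Y_N$, $M=1$ and $\omega=d$.

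The order then improves to $p=1$ for free. Fix $\tau\ge 0$. For $f\in Y_N$ and $0\le t\le\tau$ put $g:=S_N(t)f$, which lies in $D(\mc A_N)$; Lemma~\ref{le:num_bound} gives
\[
\max_{0\le k\le N}\bigl|(A_NP_Ng)_k-(P_N\mc A_Ng)_k\bigr|\le\frac{C_0}{N}\bigl(\|(A+C)g\|_{C^2}+\|(A-C)g\|_{C^2}\bigr).
\]
Since $A,C\in C^4\subset C^2$ and multiplication is bounded $C^2\times C^2\to C^2$, the right-hand side is at most $\frac{C_2}{N}\|g\|_{C^2}=\frac{C_2}{N}\|S_N(t)f\|_{Y_N}$ with $C_2$ independent of $N$, which is precisely estimate~\eqref{eqn:Gen} with $p=1$. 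Note that, unlike in the original argument, no lower bound on $\|S_N(t)f\|_{Y_N}$ is needed: the factor $N$ lost in Lemma~\ref{le:norm_op} was exactly the failure of the old $Y_N$-norm to control the sup-norm of $g''$, and the new norm does control it. Proposition~\ref{prop:appr_first_gen} now gives $\|T_N(t)P_Nf-P_NS_N(t)f\|_{X_N}\le \frac{C'}{N}\|f\|_{C^2}$ on $[0,\tau]$, with $C'$ depending only on $C_2,\tau,d$.

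It remains to transfer this to the distributions exactly as in the passage following Theorem~\ref{thmFP}. Taking $f=u_0^N$ — which by Assumptions~\ref{ass:u0} lies in $D(\mc A_N)=Y_N$ and has $\|u_0^N\|_{C^2}=\|u_0\|_{C^2([0,1])}$ independent of $N$ — the last display reads $|u^N(t,k/N)-q_k(t)|\le \tilde K/N$ uniformly on $[0,t_0]$; dividing by $Q_N=N\int_0^1u_0+o(N)$, and using as before that the boundary conditions of~\eqref{FPdens} keep $\int u^N$ constant in time, yields $|v(t,k/N)-p_k(t)|\le K/N^2$. If one wishes to weaken the regularity of $u_0$, one replaces this final step by Lemma~\ref{lem:anal_approx} with $p=1$. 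There is no serious obstacle in this argument once Proposition~\ref{prop:C2} is available: the whole content of the theorem is precisely that the $C^2$ norm of the solution does not grow with $N$, i.e. Conjecture~\ref{conj:C2}, so the genuine difficulty — and the reason the result is stated conditionally — lies entirely in establishing that conjecture.
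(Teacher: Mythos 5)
Your argument is correct and is essentially the paper's own (very terse) proof spelled out in full: the paper also replaces the role of Lemma~\ref{le:norm_op} by applying Proposition~\ref{prop:C2} to control $\|u^N(t,\cdot)\|_{C^2}$ uniformly in $N$, which turns the $O(1/N)$ bound of Lemma~\ref{le:num_bound} into estimate~\eqref{eqn:Gen} with $p=1$ and hence yields $O(1/N^2)$ after normalizing by $Q_N$. Your reorganization — swapping the abstract $Y_N$-norm for the $C^2$-norm so that the awkward lower bound on $\|S_N(t)f\|_{Y_N}$ is no longer needed — is a clean presentation of the same idea, not a different route.
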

\begin{proof}
Note that we may gain a factor $N$ by applying Proposition \ref{prop:C2} right after the first inequality in the proof of Lemma \ref{le:norm_op}.
\end{proof}

\begin{theo}
Assuming Conjectures \ref{conj:C2}, \ref{conj:C3} and \ref{conj:C2_bound} the estimate in Corollary \ref{corFP} takes the form
$$\left|v\left(t,\frac{k}{N}\right) - p_k(t)\right|\leq\frac{K}{N^3},\quad  t\in[0,t_0], \quad k=0,1,2,\ldots , N. $$
\end{theo}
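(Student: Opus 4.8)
The plan is to re-run the semigroup-approximation argument of Section~\ref{sec:OpSemigroup} one order higher --- that is, to verify \eqref{eqn:Gen} with $p=2$ in Proposition~\ref{prop:appr_first_gen} (or rather in its analytic refinement Lemma~\ref{lem:anal_approx}) --- and then to divide by $Q_N$ exactly as in the passage from Theorem~\ref{thmFP} to Corollary~\ref{corFP}. It therefore suffices to prove $|u^N(t,k/N)-q_k(t)|\le K/N^2$ uniformly for $t\in[0,t_0]$, since $Q_N=N\int_0^1 u_0+o(N)$ then gains a further power of $N$ for $v$ and $p_k$.

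The first ingredient is an upgrade of the working space: in place of $Y_N$ from Lemma~\ref{le:exp_bound} I would take $Z_{3,N}$ with its $C^3$-norm. Granting Conjectures~\ref{conj:C2} and~\ref{conj:C3} and the strong-continuity hypothesis of Proposition~\ref{prop:C3}, that proposition says $e^{-dt}S_N(t)$ is a contraction on $Z_{3,N}$ for every $N>N_0$, so $Z_{3,N}$ is an $S_N$-invariant Banach space carrying the growth bound needed in Proposition~\ref{prop:appr_first_gen}, while $Z_{3,N}\subset D(\mc{A}_N)$ and $P_N Z_{3,N}\subset D(A_N)=\mb{C}^{N+1}$ automatically. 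As $A,C\in C^4$, multiplication by $A\pm C$ maps $C^3$ boundedly into itself, so $\|(A\pm C)S_N(t)f\|_{C^3}\le C_{A,C}\,e^{dt}\|f\|_{C^3}$ uniformly in $N$; this replaces Lemma~\ref{le:norm_op} and, crucially, no longer loses the factor $N$ that cost one order there.

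The second ingredient is a sharpening of Lemma~\ref{le:num_bound} from $O(1/N)$ to $O(1/N^2)$. For interior indices $1\le k\le N-1$ I would carry the Taylor expansions of $F_1,F_2$ about $k/N$ to third order with Lagrange remainder; as the Remark after Lemma~\ref{le:num_bound} observes, the particular choice $\tfrac{1}{2N}(A+C)$ of the second-order coefficient makes the $O(1/N)$ contributions cancel, leaving $|(A_NP_Ng)_k-(P_N\mc{A}_Ng)_k|\le \tfrac{C}{N^2}\bigl(\|(A+C)g\|_{C^3}+\|(A-C)g\|_{C^3}\bigr)$. For $k=0$ and $k=N$ the fictitious boundary node again produces an extra term; expanding it to third order and using the boundary condition $F_1'(-h)=2NF_2(-h)$ together with $C(0)=0$, all contributions up to and including the second order at the endpoint collapse except one, so that for $g=S_N(t)u_0$ the extra term equals $O(1/N)$ times $((A-C)S_N(t)u_0)''$ evaluated in the boundary strip, plus an $O(1/N^2)$ remainder controlled by $\|(A\pm C)S_N(t)u_0\|_{C^3}$. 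This is precisely where Conjecture~\ref{conj:C2_bound} is needed: it is meant to force $((A-C)S_N(t)u_0)''$ --- read in conjunction with the boundary condition, which ties $(S_N(t)u_0)'$ at the endpoint to the (correspondingly small) value of $S_N(t)u_0$ there via the uniform $C^1$ bound of Lemma~\ref{le:exp_bound2} --- to be $O(1/N)$ throughout $[-h,0]\cup[1,1+h]$, so that the boundary contribution is $O(1/N)\cdot O(1/N)=O(1/N^2)$ as well. Combining the interior and boundary estimates with the growth bound of the first ingredient gives $\|A_NP_NS_N(t)f-P_N\mc{A}_NS_N(t)f\|_\infty\le \tfrac{C'}{N^2}\|S_N(t)f\|_{Z_{3,N}}$, i.e.\ \eqref{eqn:Gen} with $p=2$.

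It remains to invoke Proposition~\ref{prop:appr_first_gen}, or better its analytic sharpening Lemma~\ref{lem:anal_approx}: analyticity of $S_N$ lets us admit $u_0^N$ in a fractional domain, so only the regularity already guaranteed by the assumptions on $u_0$ is needed, and the relevant norm of $u_0^N$ is bounded uniformly in $N$. One obtains $\|T_N(t)P_Nu_0^N-P_NS_N(t)u_0^N\|_\infty\le C''/N^2$ on $[0,t_0]$, i.e.\ $\max_k|q_k(t)-u^N(t,k/N)|\le C''/N^2$, and dividing by $Q_N\sim N\int_0^1 u_0$ yields the asserted $O(1/N^3)$ bound. The main obstacle is unquestionably the boundary term: the interior estimate is a routine higher-order Taylor computation once Proposition~\ref{prop:C3} supplies the uniform $C^3$ growth bound, whereas the $O(1/N^2)$ control of the boundary contribution rests entirely on showing that, along an orbit issuing from an admissible non-negative $u_0$, the solution --- and in particular $((A-C)S_N(t)u_0)''$ --- really does stay $O(1/N)$ in the boundary strips, which is the content, and the delicate part, of Conjecture~\ref{conj:C2_bound}.
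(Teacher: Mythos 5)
Your proposal follows exactly the paper's (terse) proof sketch: upgrade the stability estimates to the $C^3$ level via Proposition~\ref{prop:C3}, push the Taylor expansion in Lemma~\ref{le:num_bound} to third order for interior nodes, invoke Conjecture~\ref{conj:C2_bound} to upgrade the boundary term by one order, feed the resulting $O(1/N^2)$ generator estimate into Proposition~\ref{prop:appr_first_gen}/Lemma~\ref{lem:anal_approx}, and divide by $Q_N\sim N\int_0^1 u_0$ for the final factor. The interior estimate and the $C^3$ growth bound are correct as you describe them.

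However, by spelling out the boundary step in detail you have made a genuine gap visible, and your attempt to bridge it does not work as written. After the Taylor expansion around $z=-h$ and the cancellations coming from the boundary condition and $C(0)=0$, the leading surviving contribution at $k=0$ is proportional to $\tfrac{1}{N}F_2''(-h)$ with $F_2=(A-C)g$ (the $F_1''$ contributions enter only as a first difference, which costs an extra $1/N$ once $F_1\in C^3$ is available). Now $F_2''=(A-C)''g+2(A-C)'g'+(A-C)g''$, and you need all three pieces to be $O(1/N)$ at $-h$. Conjecture~\ref{conj:C2_bound} controls $g''$; the boundary condition together with the uniform $C^1$ bound forces $g(-h)=O(1/N)$ because $g'(-h)=\gamma g(-h)$ with $\gamma=O(N)$; but that very relation shows that $g'(-h)$ itself is generically only $O(1)$, not $O(1/N)$. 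Consequently $2(A-C)'(-h)g'(-h)=O(1)$, and $F_2''(-h)$ is $O(1)$ rather than $O(1/N)$. Put differently, rewriting the whole boundary term via the boundary condition, one finds it equals $-\tfrac{1}{2}C'(0)\,g(-h)+O(1/N^2)$, and closing the argument would require the stronger statement $g(-h)=O(1/N^2)$ (equivalently $g'(-h)=O(1/N)$), which Conjecture~\ref{conj:C2_bound} plus Lemma~\ref{le:exp_bound2} do not supply. So the sentence claiming that Conjecture~\ref{conj:C2_bound}, in conjunction with the boundary condition and $C^1$ bound, forces $((A-C)S_N(t)u_0)''=O(1/N)$ on the boundary strip is the weak point; either Conjecture~\ref{conj:C2_bound} must be strengthened to control $g$ (or $g'$) at the same $O(1/N^2)$ (resp.\ $O(1/N)$) scale near the endpoints, or an additional argument is needed. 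The paper's proof elides exactly this point with the phrase that Conjecture~\ref{conj:C2_bound} "improves the estimate on the boundaries by one order", so this is not a defect you introduced, but it is a hole your write-up inherits rather than fills.
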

\begin{proof}
Note that Conjecture \ref{conj:C2_bound} improves the estimate on the boundaries by one order. This in turn makes it reasonable to apply Taylor approximations with third order Lagrangian remainder term for internal points, leading to error bounds of order $1/N^2$ in the $C^3$ norm in Lemma \ref{le:num_bound}.\\
Conjectures \ref{conj:C2}, \ref{conj:C3} on the other hand yield a version of Lemma \ref{le:norm_op} for the $C^3$ norms with no factor $N$ on the right hand side (cf. the argument in the previous proof), keeping the order $1/N^2$ bound. Rescaling the function $u$ to the function $v$ yields the last required factor $1/N$.
\end{proof}

\section*{Acknowledgements}
The author has received funding from the European Research Council under the European Union's Seventh Framework Programme (FP7/2007-2013) / ERC grant agreement $\mathrm{n}^\circ$617747, and from the MTA R\'enyi Institute Lend\"ulet Limits of Structures Research Group.


\end{document}